\newtheorem{theorem}{Theorem}[section]
\newtheorem{proposition}[theorem]{Proposition}
\newtheorem{lemma}[theorem]{Lemma}
\newtheorem{corollary}[theorem]{Corollary}
\theoremstyle{definition}
\newtheorem{definition}[theorem]{Definition}
\theoremstyle{remark}
\newtheorem{remark}[theorem]{Remark}
\numberwithin{equation}{section}
  \newcommand{\R}{{\rm I\mskip -3.5mu R}}
  \newcommand{\N}{{\rm I\mskip -3.5mu N}}
  \renewcommand{\S}{{\text{S}}}
  \newcommand{\R}{{\mathbb R}}
  \newcommand{\N}{{\mathbb N}}
  \renewcommand{\S}{{\mathbb S}}
\newcommand{\C}{{\mathcal C}}
\newcommand{\abs}[1]{\mathopen|#1\mathclose|}
\newcommand{\biggabs}[1]{\biggl|#1\biggr|}
\newcommand{\norm}[1]{\mathopen\|#1\mathclose\|}
\newcommand{\smallvdots}{%
  {\vrule height 2.3ex depth 0.3ex width 0pt\smash{\vdots}}}
\newcommand{\e}{\varepsilon}
\let\phi=\varphi
\newcommand{\ibar}{{\bar{\text{\itshape \i}}}}
\def\l{\lambda}
\def\a{\alpha}
\def\L{\Lambda}
\def\de{\partial}
\newcommand{\dd}[2]{\frac{\mathrm{d} #1}{\mathrm{d} #2}}
\def\di12{\mathcal{D}^{1,2}(\R^n)}
\newcommand{\rad}{{\text{\upshape rad}}}
\newcommand{\intd}{\,\mathrm{d}}
\DeclareMathOperator{\Ran}{Ran}% Range
\renewenvironment{itemize}{%
  \ifnum \@itemdepth >3 \@toodeep \else
  \advance\@itemdepth \@ne
  \edef\@itemitem {labelitem\romannumeral \the \@itemdepth}%
  \list{\csname\@itemitem\endcsname}{%
    \setlength{\itemindent}{0pt}%
    \setlength{\labelwidth}{\itemindent}%
    \setlength{\labelsep}{1ex}%
    \setlength{\itemsep}{0.1ex}%
    \setlength{\leftmargin}{2em}%
    \def\makelabel##1{\hss\llap{\upshape ##1}}%
  }\fi
}{%
  \global \advance \@listdepth \m@ne
  \endtrivlist
}
\newcommand{\FIXME}[1]{%
  \begin{hl}#1\end{hl}}
\title[A non-variational system involving the critical Sobolev exponent]{%
  A non-variational system involving the critical Sobolev exponent.
  The radial case.}
\thanks{The first author is supported by Gruppo Nazionale per
  l'Analisi Matematica, la Probabilit\'a e le loro Applicazioni (GNAMPA)
  of the Istituto Nazionale di Alta Matematica (INdAM). The first two
  authors are supported by PRIN-2012-grant ``Variational and
  perturbative aspects of nonlinear differential problems''.
  The third author is partially supported by the project
  ``Existence and asymptotic behavior of solutions
  to systems of semilinear elliptic partial differential equations''
  (T.1110.14)
  of the \textit{Fonds de la Recherche Fondamentale Collective},
  Belgium. }
\author[Gladiali]{Francesca  Gladiali}
\address{Francesca Gladiali, Dipartimento Polcoming, Universit\`a  di Sassari  - Via Piandanna 4, 07100 Sassari, Italy.}
\email{fgladiali@uniss.it}
\author[Grossi]{Massimo Grossi}
\address{Massimo Grossi, Dipartimento di Matematica, Universit\`a di Roma
La Sapienza, P.le A. Moro 2 - 00185 Roma, Italy.}
\email{massimo.grossi@uniroma1.it}
\author[Troestler]{Christophe Troestler}
\address{Christophe Troestler, D\'epartement de math\'ematique,
  Universit\'e de Mons, place du parc 20, B-7000 Mons, Belgium.}
\email{Christophe.Troestler@umons.ac.be}
\subjclass[2010]{35B09, 35B32, 35B33, 35J47}
\keywords{Elliptic system, positive solutions,
  critical exponent, bifurcation,
  spectrum at the standard bubble,
  Crandall-Rabinowitz theorem,
  Pohozaev identity.}
\begin{document}

\begin{abstract}
  In this paper we consider the non-variational system
  \begin{equation}
    \begin{cases}
      \displaystyle
      -\Delta u_i = \sum\limits_{j=1}^ka_{ij}u_j^{\frac {N+2}{N-2}}
      &\text{in }\R^N,\\
      u_i>0 &\text{in }\R^N,\\
      u_i\in D^{1,2}(\R^N).
    \end{cases}
  \end{equation}
  and we give some sufficient conditions on the matrix
  $(a_{ij})_{i,j=1,\dotsc ,k}$ which ensure the existence of solution
  bifurcating from the bubble of the critical
  Sobolev equation.
\end{abstract}
\maketitle
\tableofcontents

\section{Introduction}

\subsection{Setting of the problem}

In this paper we consider the $k\times k$ system
\begin{equation} \label{y1}
  \begin{cases}
    \displaystyle
    -\Delta u_i = \sum\limits_{j=1}^ka_{ij}u_j^{2^* - 1}
    &\text{in }\R^N,\\
    u_i>0 &\text{in }\R^N,\\[1\jot]
    u_i\in D^{1,2}(\R^N),
  \end{cases}
\end{equation}
for $i=1,\dotsc ,k$,
where $N\ge 3$, $D^{1,2}(\R^N) = \bigl\{u\in L^{2^*}(\R^N)\text{ such that }
|\nabla u| \in L^2(\R^N)\bigr\}$,
$2^*=\frac{2N}{N-2}$ and the
matrix $A := (a_{ij})_{i,j=1,\dotsc ,k}$ satisfies
\begin{gather} \label{y1a}
  A \text{ is symmetric,}\\
  \label{y2}
  \sum\limits_{j=1}^k a_{ij} = 1
  \quad\text{for any } i = 1,\dotsc ,k.
\end{gather}
Assumptions \eqref{y1a} and \eqref{y2} imply that $A$ possesses
$(k-1)k/2$ free parameters, i.e., the set of matrices satisfying
\eqref{y1a}--\eqref{y2} form an affine subspace of dimension
$(k-1)k/2$.
For example, for $k=3$, such a matrix can be written
\begin{equation*}
  A = \begin{pmatrix}
    1-\a_1-\a_2&\a_1&\a_2\\
    \a_1&1-\a_1-\a_3&\a_3\\
    \a_2&\a_3&1-\a_2-\a_3
  \end{pmatrix}
  \quad \text{for } \a_1, \a_2, \a_3 \in \R.
\end{equation*}
If $u_1=u_2=\dots=u_k$ then \eqref{y1} reduces to the the
classical critical Sobolev equation
\begin{equation}
  \label{critico}
  \begin{cases}
    -\Delta u = u^{\frac{N+2}{N-2}}\quad\text{in }\R^N,\\[1\jot]
    u>0  \quad\text{in }\R^N,\\[1\jot]
    u \in D^{1,2}(\R^N).
  \end{cases}
\end{equation}
Actually, our problem can be seen as a straightforward generalization
of Equation~\eqref{critico} to the case of systems.

It is well known (see \cite{CGS}) that \eqref{critico} admits the
$(N+1)$-parameter family of solutions given by the standard bubbles
\begin{equation*}
  \label{bubble}
  U_{\delta,y}(x)
  := \frac{\left[N(N-2)\delta^2\right]^{\frac{N-2}4}}{
    (\delta^2+|x-y|^2)^{\frac{N-2}2}}
\end{equation*}
where $\delta>0$ and $y\in \R^N$. For simplicity we will denote by
\begin{equation}\label{sol}
  U(x) := U_{1,0}(x)
  = \frac {\left[N(N-2)\right]^{\frac{N-2}4}}{
    (1+|x|^2)^{\frac{N-2}2}}.
\end{equation}
In this paper we want to prove the existence of solutions to
Problem~\eqref{y1} that are different from the \textit{trivial} solution
$(U_{\delta,y},\dots, U_{\delta,y})$.\par
We now show how our system, choosing particular matrices $A$, extends
some known cases in the literature. The first example corresponds to
the matrix
$A = \bigl(\begin{smallmatrix} 0&1\\ 1&0 \end{smallmatrix}\bigr)$, so
that we have
\begin{equation}\label{y2a}
  \begin{cases}
    -\Delta u=v^{\frac {N+2}{N-2}}  &\text{in }\R^N,\\
    -\Delta v=u^{\frac {N+2}{N-2}}  &\text{in }\R^N,\\
    u,v>0 &\text{in }\R^N,\\
    u,v\in D^{1,2}(\R^N).
  \end{cases}
\end{equation}
This is a case in which the powers of the non-linearity belong to the
so-called \textit{critical hyperbola} introduced by Mitidieri
in~\cite{M1} and~\cite{M2} (see also \cite{CDM} and \cite{PV}). It is
an interesting open problem to determine whether the
system \eqref{y2a} admits nontrivial solutions.

Among the other results, we will show that the trivial solution $(U,U)$
to~\eqref{y2a} is non-degenerate, up to translations and dilations (see Theorem~\ref{lin-k}
and the ensuing discussion on page~\pageref{U-non-degenerate}).

Another interesting system which has a lot of similarities
with~\eqref{y1} is the well known \textit{Toda system}, namely
\begin{equation}
  \begin{cases}
    \displaystyle
    -\Delta u_i=\sum\limits_{j=1}^k c_{ij} \, {\operatorname e}^{u_j}
    &\text{in }\R^2,\\[5\jot]
    \displaystyle \,
    \int_{\R^2} {\operatorname e}^{u_j} <+\infty,
  \end{cases}
\end{equation}
for $i=1,\dots,k$,
where $C=(c_{ij})_{i,j=1,\dotsc,k}$ is the Cartan matrix given by
\begin{equation*}
  C = \begin{pmatrix}
    2&-1&0& \cdots &\cdots &0\\
    -1&2&-1&0&\cdots &0\\
    0&-1&2&-1& &0\\
    \vdots &\ddots&\ddots&\ddots&\ddots&\vdots&\\
    0&&\ddots &-1&2&-1\\
    0&\dots&\dots&0&-1&2
  \end{pmatrix}
\end{equation*}
Since we are considering a problem in the plane, the exponential
nonlinearity is the natural equivalent. There is a huge literature
about the Toda system, we just recall the classification result of
\cite{JW} and the paper \cite{GGW} where the matrix
$C = \bigl(\begin{smallmatrix} 2&\mu\\ \mu&2\end{smallmatrix}\bigr)$
was considered for $\mu\in(-2,0)$. Observe that if we consider the
Toda system with a general symmetric, invertible, irreducible matrix
$C$ and assume that all the entries $c_{ij}$ are positive, then (see
\cite{CK,CSW,LZ1,LZ2}) all
solutions are radial. In analogy with this result, we believe that if
all entries of $A$ are positive then all solutions to~\eqref{y1}
are radial. We do not investigate this problem in this paper (even if
we think that this problem deserves to be studied in the future) and allow some
coefficients $a_{ij}$ to be negative.  We will see that this causes
the solution set to have a richer structure.

\subsection{Main results and idea of the proof}

A basic remark is that the system~\eqref{y1} does not have a
\textit{variational structure} and so we cannot apply variational
methods. Moreover the critical powers induce a \textit{lack of
  compactness}.  Our basic tool will be the \textit{bifurcation theory}.
Solutions to~\eqref{y1} are zeros of the
functional
$F=(F_1, \dotsc, F_k) : \left(D^{1,2}(\R^N)\right)^k\to
\left(D^{1,2}(\R^N)\right)^k$ defined by
\begin{equation}
  \label{eq:F}
  F_i(u_1,\dots,u_k)
  = u_i - (-\Delta)^{-1}
  \Biggl(\sum\limits_{j=1}^ka_{ij}(u_j^+)^{\frac {N+2}{N-2}} \Biggr)
\end{equation}
for $i=1,\dots,k$.  Of course we have that $F_i(U,\dots,U)=0$ and our
aim is to find solutions close to $(U,\dots,U)$ for suitable values of
$(a_{ij})$. We will use the classical
\textit{Crandall-Rabinowitz theorem}~\cite{CRJFA}.  Its application
requires three basic ingredients:
\begin{enumerate}
\item[(i)] a good functional setting for the operator $F$,
\item[(ii)] a 1-dimensional kernel for the linearized operator $F'$,
\item[(iii)] a transversality condition.
\end{enumerate}
The lack of compactness and the rich structure of
the kernel of the linearized operator
(see Proposition~\ref{lin-k} below) make conditions (i) and
(ii) not easy to check.  (Condition (iii) will be a
straightforward computation involving the Jacobi polynomials). Now we
discuss the main points and the difficulties to be overcome in (i)
and (ii).

Let us start with the functional setting.

First of all let us note that it is not immediate to derive that our
solutions are positive. Indeed, since some of the entries $a_{ij}$ are
not necessarily positive, we cannot apply the maximum principle.
And even if they were, $F$ defined in~\eqref{eq:F}
is not smooth enough because $D^{1,2} \to D^{1,2} : u \mapsto u^+$
is not differentiable.
This problem will
be solved by restricting our operator to the subspace of
$D^{1,2}\left(\R^N\right)$ of functions decaying as $|x|^{2-N}$ at
infinity. This choice, if from one side will allow to establish the
positivity of the solution, on the other hand creates problems to
prove the compactness of the linearized operator. This will be
discussed in Section~\ref{s3}.

Now we discuss the point (ii), i.e., the linearization of $F$
around the trivial solution $(U,\dots,U)$. This leads to study the
problem,
\begin{equation} \label{linearization-2}
  \begin{cases}
    \displaystyle
    -\Delta v_i
    =\frac{N(N+2)}{\left(1+|x|^2\right)^2} \sum_{j=1}^ka_{ij}v_j
    & \text{in }\R^N,\\[1\jot]
    v_i \in D^{1,2}(\R^N),
  \end{cases}
\end{equation}
for $i=1,\dots,k$.
It will be shown that \eqref{linearization-2} can be reduced to the
classification of eigenvalues and eigenfunctions of the linearized
problem associated to the critical equation~\eqref{critico} at the
standard bubble $U$, namely,
\begin{equation} \label{eigenv}
  \begin{cases}
    \displaystyle
    -\Delta w = \lambda  \frac{N(N+2)}{(1+|x|^2)^2} w
    \quad\text{in }\R^N, \\
    w \in D^{1,2}(\R^N).
  \end{cases}
\end{equation}
It is known that $\l_0=\frac{N-2}{N+2}$ and $\l_1=1$ but nothing is
known about the other eigenvalues.  Our first result completely
describes problem~\eqref{eigenv}.  We believe that this result has
its own independent interest.
\begin{theorem}
  \label{lin-critico}
  The eigenvalues of Problem~\eqref{eigenv} are the numbers
  \begin{equation}\label{aut}
    \lambda_n=\frac{(2n+N-2)(2n+N)}{N(N+2)},\quad n\ge0.
  \end{equation}
  Each eigenvalue $\lambda_n$ has multiplicity
  \begin{equation}\label{mult}
    m(\lambda_n) = \sum\limits_{h=0}^n \frac{(N+2h-2)(N+h-3)!}{(N-2)!\,h!}
  \end{equation}
  and the corresponding eigenfunctions are, in radial coordinates,
  linear combinations of
  \begin{equation}\label{sol-lin}
    w_{n,h}(r,\theta)
    = \frac{r^h}{(1+r^2)^{h+\frac{N-2}2}}
    P_{n-h}^{\left(h+\frac{N-2}2, h+\frac{N-2}2\right)}
      \left(\frac{1-r^2}{1+r^2}\right)
    Y_h(\theta)
  \end{equation}
  for $h=0,\dots,n$, where $Y_h(\theta)$ are
  spherical harmonics related to the eigenvalue $h(h+N-2)$ and
  $P_j^{(\beta, \gamma)}$ are the Jacobi polynomials.
\end{theorem}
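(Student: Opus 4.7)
The plan is to reduce Problem~\eqref{eigenv} on $\R^N$ to the pure spectral problem for the Laplace--Beltrami operator on the round sphere $\S^N$, where the answer is classical, and then to pull back the eigenfunctions via the stereographic projection.

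First I would exploit the conformal equivalence between $(\R^N,g_{\text{euc}})$ and $(\S^N\setminus\{\text{north pole}\},g_{\S^N})$. Using inverse stereographic projection $\pi^{-1}\colon \R^N\to \S^N$ (so that, in polar coordinates, $r=|x|=\tan(\phi/2)$, hence $\sin\phi=\frac{2r}{1+r^2}$ and $\cos\phi=\frac{1-r^2}{1+r^2}$), the conformal factor is
\begin{equation*}
  \Phi(x) := \left(\frac{2}{1+|x|^2}\right)^{\frac{N-2}{2}}.
\end{equation*}
The map $\tilde w \mapsto w$ with $w(x) = \Phi(x)\,\tilde w(\pi^{-1}(x))$ is (up to a multiplicative constant) an isometry between $H^1(\S^N)$ equipped with the inner product $\int_{\S^N}\bigl(|\nabla \tilde w|^2 + \tfrac{N(N-2)}{4}\tilde w^2\bigr)$ and $D^{1,2}(\R^N)$; this is the well-known consequence of the conformal invariance of the conformal Laplacian $L_g=-\Delta_g+\tfrac{N-2}{4(N-1)}R_g$, coupled with $R_{\S^N}=N(N-1)$. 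A direct computation, using $\Phi^{4/(N-2)}=4/(1+|x|^2)^2$, then shows that~\eqref{eigenv} is equivalent to
\begin{equation*}
  -\Delta_{\S^N}\tilde w
  \;=\; \mu\, \tilde w \quad\text{on } \S^N,
  \qquad\text{with}\qquad
  \mu \;=\; \frac{N(N+2)\lambda - N(N-2)}{4}.
\end{equation*}

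Next, I would invoke the classical fact that the eigenvalues of $-\Delta_{\S^N}$ are $\mu_n=n(n+N-1)$ for $n\ge 0$, with eigenspace $\mathcal H_n(\S^N)$ of dimension $\frac{(2n+N-1)(n+N-2)!}{(N-1)!\,n!}$. Solving for $\lambda$ gives
\begin{equation*}
  \lambda_n \;=\; \frac{4\,n(n+N-1) + N(N-2)}{N(N+2)}
  \;=\; \frac{(2n+N-2)(2n+N)}{N(N+2)},
\end{equation*}
which is~\eqref{aut}. For the eigenfunctions I would use the standard ``zonal'' decomposition of spherical harmonics: writing a point of $\S^N$ as $(\sin\phi\,\omega,\cos\phi)$ with $\omega\in\S^{N-1}$ and $\phi\in[0,\pi]$, any element of $\mathcal H_n(\S^N)$ is a linear combination of the $(\sin\phi)^h C^{h+(N-1)/2}_{n-h}(\cos\phi)\,Y_h(\omega)$, for $h=0,\dots,n$ and $Y_h\in\mathcal H_h(\S^{N-1})$, where $C_j^\alpha$ are Gegenbauer polynomials. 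Using the classical identity $C_j^\alpha(t) = c_{j,\alpha}\,P_j^{(\alpha-1/2,\alpha-1/2)}(t)$ (with $\alpha=h+(N-1)/2$, so $\alpha-1/2=h+(N-2)/2$), and pulling back by $\Phi(x)$ together with $\sin\phi=2r/(1+r^2)$ and $\cos\phi=(1-r^2)/(1+r^2)$, one obtains exactly the expression~\eqref{sol-lin} for $w_{n,h}$, modulo an irrelevant constant.

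Finally, the multiplicity~\eqref{mult} follows from the branching rule $\dim \mathcal H_n(\S^N)=\sum_{h=0}^n \dim\mathcal H_h(\S^{N-1})$, combined with $\dim\mathcal H_h(\S^{N-1})=\frac{(2h+N-2)(h+N-3)!}{(N-2)!\,h!}$. The potentially delicate point is the functional-analytic one: I must argue that the above isometry $H^1(\S^N)\cong D^{1,2}(\R^N)$ intertwines the two eigenvalue problems \emph{exactly}, so that the $\R^N$ spectrum is \emph{only} what pulls back from $\S^N$ and no extra eigenvalues appear (and that every $D^{1,2}$ eigenfunction is of this form). This rigidity, rather than the explicit computation of Jacobi/Gegenbauer polynomials or the verification of the combinatorial identity for $m(\lambda_n)$, is the step that requires the most care.
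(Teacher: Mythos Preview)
Your proposal is correct and takes a genuinely different route from the paper.

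The paper proceeds by direct ODE analysis: it separates variables via spherical harmonics on $\S^{N-1}$, obtains a radial ODE for each angular mode $h$, then performs a sequence of substitutions---first $\psi_h(r)=r^{-(N-2)/2}B_h(r)$, then the change of variable $\xi=(1-r^2)/(1+r^2)$, and finally $A_h(\xi)=(1-\xi^2)^{-\nu_h/2}R_h(\xi)$---to land on the Jacobi equation. The quantization of $\lambda$ comes from demanding boundedness of $A_h$ at $\xi=\pm1$, which in turn is derived from decay estimates on $B_h$ at $r=0$ and $r=\infty$ obtained via a lemma in~\cite{GGN}. The paper's argument is thus entirely self-contained at the ODE level and does not invoke the sphere at all.

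Your approach replaces this chain of substitutions with a single geometric observation: the weight $N(N+2)/(1+|x|^2)^2$ is exactly what the conformal covariance of the Yamabe operator produces under stereographic projection, so Problem~\eqref{eigenv} is the pull-back of the Laplace--Beltrami eigenvalue problem on $\S^N$. The eigenvalues, eigenfunctions (via the zonal decomposition and the Gegenbauer--Jacobi identification), and multiplicities are then all classical. This is shorter and more conceptual, and it explains \emph{why} Jacobi/Gegenbauer polynomials appear rather than discovering them through changes of variable. The paper's approach, on the other hand, is more elementary in its prerequisites and makes the decay conditions at $0$ and $\infty$ explicit, which is useful elsewhere in the paper (e.g., in the space~$X$). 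You are right that the only point requiring genuine care in your argument is the exact correspondence $D^{1,2}(\R^N)\cong H^1(\S^N)$ so that no extra spectrum leaks in; this is the standard conformal-Laplacian isometry (the stereographic pole being harmless by smoothness and a capacity argument), and once stated cleanly the rest is bookkeeping.
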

This result extends Theorem~11.1 in \cite{GG} where the linearized
problem of the Liouville equation in $\R^2$ at the standard bubble was
considered and highlights the role of the Jacobi polynomials as
extension of the Legendre polynomials.

Theorem~\ref{lin-critico} will be used to describe all solutions
to~\eqref{linearization-2} thanks to a change of variables to
diagonalize~$A$.
Since $A$ is symmetric, it possesses $k$
real eigenvalues $\L_1,\dotsc, \L_k$, counting algebraic multiplicity.
Assumption~\eqref{y2} implies that $1$ is always
an eigenvalue of $A$ with (at least) the eigenvectors
spanned by $(1,\dotsc,1)$.
So, without loss of generality, we can set $\L_1 = 1$.
We have the following result,
\begin{proposition}\label{lin-k}%
  Equation~\eqref{linearization-2} possesses a solution
  $v = (v_1,\dotsc,v_k) \ne (0,\dotsc,0)$ if and only if
  \begin{equation}\label{cond}
    \L_i = \lambda_n := \frac{(2n+N-2)(2n+N)}{N(N+2)}
  \end{equation}
  for some $i \in \{1,\dotsc,k\}$ and $n \in \N$.  The solutions
  coming from \eqref{cond} are
  given by
  \begin{equation}\label{eq:eigenfun}
    v = \sum_{h=0}^n c_{h} \, w_{n,h},
  \end{equation}
  where $c_{h} \in \R^k$ satisfy $A c_{h} = \lambda_n c_{h}$
  and $w_{n,h}$ are defined by~\eqref{sol-lin}.
  If several equalities of the form \eqref{cond} hold at the same
  time, the set of solutions to~\eqref{linearization-2} is the linear span
of the associated solutions of the
  form~\eqref{eq:eigenfun}.

  In particular, one always has $\L_1 = 1 = \lambda_1$ and the
  corresponding solutions are given~by
  \begin{equation}\label{1.16}
    v = c_0 \Bigl(x\cdot \nabla U+\frac{N-2}2U\Bigr)
    + \sum_{i=1}^N c_i\frac{\partial U}{\partial x_i}
  \end{equation}
  for some $c_0, c_1,\dots,c_N \in \R^k$ such that
  $A c_i = c_i$ for all $i \in \{0,1,\dotsc,N\}$.
\end{proposition}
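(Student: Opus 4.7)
My plan is to reduce Equation~\eqref{linearization-2} to $k$ independent copies of the scalar eigenvalue problem~\eqref{eigenv} via an orthogonal change of basis, and then invoke Theorem~\ref{lin-critico}.

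First, since $A$ is symmetric, I would pick an orthogonal matrix $P$ whose columns $e^{(1)},\dotsc,e^{(k)} \in \R^k$ form an orthonormal basis of eigenvectors satisfying $A e^{(i)} = \L_i e^{(i)}$. Setting $v = P w$ componentwise (that is, $v_j(x) = \sum_i P_{ji}\, w_i(x)$), the system~\eqref{linearization-2} is equivalent to
\begin{equation*}
  -\Delta w_i = \L_i\, \frac{N(N+2)}{(1+|x|^2)^2}\, w_i
  \quad \text{in } \R^N,\qquad w_i \in D^{1,2}(\R^N),
\end{equation*}
for $i = 1,\dotsc,k$. Each line is an instance of~\eqref{eigenv}.

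Next, by Theorem~\ref{lin-critico} the $i$-th decoupled equation admits a nontrivial solution if and only if $\L_i = \lambda_n$ for some $n \in \N$, in which case $w_i$ is a linear combination of the functions $w_{n,h}$ for $h = 0,\dotsc,n$ (with the spherical harmonics $Y_h$ ranging over a basis of their respective eigenspaces). Going back via $v = P w = \sum_i w_i\, e^{(i)}$ and regrouping, the sum restricts to those indices $i$ with $\L_i = \lambda_n$ and may be written
\begin{equation*}
  v = \sum_{h=0}^n c_h\, w_{n,h},
  \qquad c_h := \sum_{i\,:\,\L_i = \lambda_n} \gamma_{i,h}\, e^{(i)},
\end{equation*}
for suitable scalars $\gamma_{i,h}$. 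By construction each $c_h$ lies in the $\lambda_n$-eigenspace of $A$, so $A c_h = \lambda_n c_h$. When $\L_i = \lambda_{n_\alpha}$ holds for several distinct values $n_\alpha$, linearity of the problem lets me superpose the contributions, yielding the span description.

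For the final assertion, assumption~\eqref{y2} guarantees $\L_1 = 1$, and since $1 = \lambda_1$ the case $n = 1$ is always admissible, contributing $h \in \{0,1\}$. Using $P_0^{(\beta,\beta)} \equiv 1$ and the explicit form~\eqref{sol} of $U$, a direct computation shows that $w_{1,0}$ is proportional to $\frac{1-|x|^2}{(1+|x|^2)^{N/2}}$, hence to $x \cdot \nabla U + \frac{N-2}{2} U$ (the dilation generator). Likewise, $w_{1,1}$, as $Y_1$ runs through an orthonormal basis of degree-one spherical harmonics on $\S^{N-1}$, produces multiples of $\partial U/\partial x_1, \dotsc, \partial U/\partial x_N$ (the translation generators). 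Substituting into the vector formula above delivers~\eqref{1.16} with $c_0, c_1, \dotsc, c_N$ in the $1$-eigenspace of $A$. The core of the argument is a linear-algebra decoupling, so there is no serious analytical difficulty; the only real care lies in bookkeeping the spherical-harmonic multiplicities from Theorem~\ref{lin-critico} in tandem with the eigenspace structure of $A$, and in the explicit (but routine) identification of $w_{1,0}, w_{1,1}$ with the dilation and translation directions of $U$.
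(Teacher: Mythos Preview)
Your proof is correct and follows essentially the same route as the paper: orthogonally diagonalize $A$ to decouple \eqref{linearization-2} into $k$ independent scalar problems of the form~\eqref{eigenv}, then invoke Theorem~\ref{lin-critico}. One small slip: in identifying $w_{1,0}$ you cite $P_0^{(\beta,\beta)}\equiv 1$, but $w_{1,0}$ actually involves $P_{1}^{(\frac{N-2}{2},\frac{N-2}{2})}(\xi)\propto \xi$ (it is $w_{1,1}$ that uses $P_0$); your stated conclusion about $w_{1,0}$ is nonetheless correct.
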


To apply Crandall-Rabinowitz theorem, let us consider a
$\C^1$-path of matrices
\begin{equation*}
  I \subseteq \R \to \R^{k\times k} : \a \mapsto A(\a)
\end{equation*}
such that, for all $\a \in I$, $A(\a)$ satisfies
\eqref{y1a}--\eqref{y2}.  How to deal with more general situations involving two
or more parameters, can be found for example in the book~\cite{K}. Let
$\L_1(\a) = 1, \L_2(\a), \dotsc, \L_k(\a)$ be the eigenvalues of
$A(\a)$.
The previous result shows that the linearized
system~\eqref{linearization-2} has two types of degeneracies. The
first one, which holds for every value of $\a$, is due to the
invariance of the problem~\eqref{y1} under dilations and translations,
while the second one appears only at the special values
$\a$ that satisfy~\eqref{cond}.

Note that, if  $n=0$ (resp.\ $n=1$) in \eqref{cond},
i.e., if $\L_i(\a) = \frac{N-2}{N+2}$ (resp.\ $\L_i(\a) = 1$),
then $v = cU$ (resp.\
$v = c_0 \bigl(x\cdot \nabla U + \frac{N-2}2U\bigr)
+ \sum c_j\frac{\de U}{\de x_j}$) are solutions.
These are the trivial values of $\a$ and they do not provide new
solutions to problem \eqref{y1}.

Thus \eqref{cond} with $n \ge 2$ is a necessary condition
to guarantee bifurcating branches of
solutions. A similar phenomenon was previously observed in \cite{GGW}
for a general $2\times 2$ Toda system in $\R^2$.  Coming back to
Problem~\eqref{y2a}, we have $\L_1=1$ and $\L_2=-1$, so
\eqref{cond} is never satisfied if $n\neq 1$. Hence the solution $(U,U)$ is
non-degenerate (up to dilation and translation).
\label{U-non-degenerate}% for the page of the previous paragraph

Proposition \ref{lin-k} says that the kernel of the linearized
operator is composed both by radial and non-radial eigenfunctions. Of
course this is a great obstruction to applying Crandall-Rabinowitz
Theorem for which a one dimensional kernel is required. For this reason we
restrict the problem to the case of radial solutions
in $D^{1,2}(\R^N)$, i.e., we
work in the space $D^{1,2}_\rad(\R^N)$.  However we think that the
existence of non-radial eigenfunctions implies a \textit{non-radial}
bifurcation from the trivial solution.  This open problem
will be investigated in the future.

In the radial setting (see Corollary \ref{corb}), the
kernel of the linearized operator \eqref{linearization-2} has a lower
dimension.
Its dimension however depends on $A$.  Here we summarize some
sufficient conditions on the matrix $A$ such that, in the radial
setting, the kernel of the linearized operator is two-dimensional.

\vskip0.2cm \centerline{\textbf{Assumptions on the matrix $A$}}\vskip0.15cm
\bgroup
\itshape %
Let us suppose that $A$ satisfies~\eqref{y1a} and~\eqref{y2}. Moreover
assume that there exist $\bar\a$ and $\ibar \in\{2,\dots,k\}$
such that
\begin{gather}
  \L_\ibar(\bar\a) = \lambda_n
  \text{ for some } n\ge 2
  \text{ (see \eqref{cond} for the definition of } \lambda_n \text{),}
  \label{eq:ibar}
  \\
  \label{y8}
  \L_j(\bar\a) \ne \lambda_n
  \text{ for any } j\notin \{1, \ibar\} \text{ and any }n\in\N,
  \\
  \label{y9}
  \dd{\L_{\ibar}}{\a}(\bar\a) \ne 0.
\end{gather}
\egroup

Some comments on the previous assumptions: \eqref{y8} implies that
$\L_{\ibar}(\bar\a)$ is simple and then the function
$\a \mapsto \L_{\ibar}(\a)$ is smooth in a neighborhood of
$\bar\a$ (see \cite{S} for example).
Assumption~\eqref{y9} can be reformulated in terms of $\dd{A}{\a}$.
Indeed, if $e_n \ne 0$ is an unit eigenvector of $A(\bar\a)$ for the
eigenvalue $\lambda_n$, it is not difficult to show that
\begin{equation}\label{eq:dA}
  e_n \cdot \dd{A}{\a}(\bar\a) e_n = \dd{\L_i}{\a}(\bar\a).
\end{equation}
Assumption~\eqref{y8} also implies that
the kernel of the linearized operator, in this radial setting, is two-dimensional and
\eqref{y9} gives the \textit{transversality}
condition of Crandall and Rabinowitz, see~\cite{CRJFA}.
%How to deal with more general situations involving two or more parameters, can be found for example in the book~\cite{K}.
Finally, constraining
further our
operator to the orthogonal space with the radial function
$W(|x|)=\frac {1-|x|^2}{(1+|x|^2)^{N/2}}$,
we get a one dimensional kernel and so
Crandall-Rabinowitz Theorem applies. Then the point
$(\bar\a,U,\dots,U)$ is a bifurcation point when
$\bar\a$ satisfies
$\L_{\ibar}(\bar\a) = \lambda_n$ for some
$\ibar =2,\dots,k$ and some $n \ge
2$.  However this construction produces a \textit{Lagrange multiplier}
for the equations satisfied by the zeros of~$F$.

The last step is to show that this Lagrange multiplier is
$0$. In our opinion this is one of the interesting points of the
paper and it will be done using a suitable version of the Pohozaev
identity.

The Pohozaev identity was used by many authors dealing with systems of
just
two equations. The extension to the case of more equations is not
straightforward and requires the additional assumption of the invertibility of the 
matrix~$A$ (see section~\ref{Pohozaev}).
Now we are in position to state our bifurcation result.
\begin{theorem}\label{Th1}
  If $A$ satisfies \eqref{y1a} and \eqref{y2} and if
  $\bar\a$ and $\ibar$ verify the assumptions
  \eqref{eq:ibar}--\eqref{y9} and if 
   \begin{equation}  \label{y10*}
  \text{the matrix } A(\bar\a) \text{ is invertible,}
  \end{equation}
   then the point $(\bar\a, U,\dots,
  U)$ is a radial bifurcation point for the curve of trivial solutions
  $\a \mapsto (\a,U,\dots,U)$ to equation \eqref{y1}. More precisely,
  there exist a continuously differentiable curve
  defined for $\e$ small enough
  \begin{equation}\label{fin**}
    (-\e_0,\e_0) \to \R \times \bigl(D^{1,2}_\rad(\R^N)\bigr)^k :
    \e \mapsto \bigl(\a(\e), u_1(\e), \dots, u_k(\e)\bigr)
  \end{equation}
  emanating from $(\bar\a,U,\dots,U)$,
  i.e., $\bigl(\a(0),u_1(0),\dots, u_k(0)\bigr)
  = (\bar\a,U,\dots,U)$,
  such that, for every $\e \in (-\e_0,\e_0)$,
  $u(\e) = (u_1(\e), \dotsc, u_k(\e))$ is a radial solution to
  \begin{equation}
    \begin{cases}
      \displaystyle
      -\Delta u_i = \sum_{j=1}^ka_{ij}(\a) \, u_j^{\frac{N+2}{N-2}}
      &\text{in }\R^N,\\
      u_i>0 &\text{in }\R^N,\\
      u_i\in D^{1,2}(\R^N),
    \end{cases}
  \end{equation}
  with $\a = \a(\e)$.  Moreover
  \begin{equation}\label{fin*}
    u(\e)
    = (1,\dotsc, 1) \, U + \e e_n W_n\left(|x|\right)
    + \e \phi_{\e}(|x|)
  \end{equation}
  where $e_n$ is an eigenvector of $A$ for the eigenvalue
  $\L_\ibar(\bar\a) = \lambda_n$,
  $W_n$ is the function defined in \eqref{eq:def-Wn}, and
  $\phi_{\e}$ is an uniformly bounded function in
  $\bigl(D^{1,2}(\R^N)\bigr)^k$ such that $ \phi_{0}=0$.
\end{theorem}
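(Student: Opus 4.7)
The plan is to apply the Crandall--Rabinowitz theorem~\cite{CRJFA} to a constrained version of the map $F$ in~\eqref{eq:F} that removes the dilation degeneracy, and then to use a Pohozaev identity adapted to the system to kill the Lagrange multiplier introduced by the constraint.

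\textbf{Setting and kernel.} I work in $X^k$, where $X$ is the closed subspace of radial functions in $D^{1,2}(\R^N)$ with $|x|^{2-N}$ decay at infinity described in Section~\ref{s3}. On $X^k$ the map $F(\a,\cdot)$ is $\C^1$ and $F'_u(U,\dotsc,U)$ is a compact perturbation of the identity. By Proposition~\ref{lin-k} in its radial form (Corollary~\ref{corb}), together with the simplicity assumption~\eqref{y8}, the kernel of $F'_u(\bar\a, U,\dotsc, U)$ in $X^k$ is exactly two-dimensional, spanned by the dilation mode, which is a multiple of $(W,\dotsc,W)$ with $W(|x|) = \frac{1-|x|^2}{(1+|x|^2)^{N/2}}$, and by $e_n W_n$, where $e_n \in \R^k$ is an eigenvector of $A(\bar\a)$ for $\lambda_n$ and $W_n$ is the radial Jacobi eigenfunction from Theorem~\ref{lin-critico}.

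\textbf{Crandall--Rabinowitz on a constrained problem.} Let $P$ be the orthogonal projector (for the $D^{1,2}$-inner product) onto the closed hyperplane $H := (W,\dotsc, W)^\perp$ in $X^k$, and define $\widetilde F(\a, u) := P F(\a, u)$. Working on the affine slice $(U,\dotsc,U) + H$ removes the dilation direction, so the kernel of $\widetilde F'_u(\bar\a, U,\dotsc, U)$ is the one-dimensional space $\spanned\{e_n W_n\}$, and the range has codimension one by the self-adjointness of $F'_u$ in the natural $L^2$-pairing. The transversality condition reduces to the non-vanishing of
\[
  \bigl\langle \partial_\a F'_u(\bar\a, U,\dotsc, U)[e_n W_n],\, e_n W_n \bigr\rangle
  = -\dd{\L_\ibar}{\a}(\bar\a)\int_{\R^N}\frac{N(N+2)}{(1+|x|^2)^2}\, W_n^2\intd x,
\]
which is non-zero thanks to~\eqref{eq:dA}, hypothesis~\eqref{y9}, and the strict positivity of the weighted $L^2$-norm of $W_n$. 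Crandall--Rabinowitz then delivers a $\C^1$ curve $\e \mapsto (\a(\e), u(\e))$ of zeros of $\widetilde F$ of the form~\eqref{fin*}. Positivity of the components $u_i(\e)$ for small $\e$ follows from $C^0$-proximity to the strictly positive $U$, which also makes the positive-part truncation in~\eqref{eq:F} immaterial along the branch.

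\textbf{Vanishing of the Lagrange multiplier.} By construction the curve satisfies $-\Delta u_i(\e) = \sum_j a_{ij}(\a(\e))\, u_j(\e)^{2^*-1} + \mu(\e)\, W$ for each $i$, where $\mu(\e)$ is a scalar with $\mu(0) = 0$; it remains to prove that $\mu(\e)\equiv 0$ for $\e$ small. This is the role of the Pohozaev identity from Section~\ref{Pohozaev}. Multiplying each equation by a suitable linear combination of the scaling multipliers $x\cdot\nabla u_j + \frac{N-2}{2}u_j$, with weights derived from $A(\a(\e))^{-1}$ (available thanks to~\eqref{y10*} and continuity), and integrating over $\R^N$, the critical-nonlinearity contributions cancel exactly by the classical critical-exponent Pohozaev cancellation, and the Laplacian terms vanish by integration by parts using the $|x|^{2-N}$ decay in $X$. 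What is left is an identity of the form
\[
  \mu(\e)\int_{\R^N} W\cdot \Phi(\e)\intd x = 0,
\]
whose coefficient at $\e = 0$ is a non-zero multiple of $\|W\|_{L^2(\R^N)}^2$ by the normalization of $W$; by continuity it remains non-zero for $\e$ close to $0$, forcing $\mu(\e)=0$ and yielding genuine solutions of~\eqref{y1}.

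\textbf{Main obstacle.} The delicate step is the Pohozaev argument. The assumption~\eqref{y10*} is precisely what allows one to find weights $A(\a(\e))^{-1}\vec{1}$ making the critical-nonlinearity terms cancel simultaneously for all $i$; the non-degeneracy of the remaining coefficient of $\mu(\e)$ then has to be checked, and boundary-at-infinity terms must be handled carefully using the tight $|x|^{2-N}$ decay in $X$.
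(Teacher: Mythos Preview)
Your proposal is correct and mirrors the paper's argument: Crandall--Rabinowitz applied to the projected operator on $K=(W,\dots,W)^\perp$ (Lemmas~\ref{ker-dzT}--\ref{transversality-dT} and Proposition~\ref{t3.2}), followed by the Pohozaev identity of Proposition~\ref{poho} with matrix weights $a_{ij}^{-1}$ to force the Lagrange multiplier to vanish. Two minor slips to tighten: the non-zero coefficient at $\e=0$ is $k\int_{\R^N}\frac{N(N+2)}{(1+|x|^2)^2}W^2\,\mathrm{d}x$ (a weighted norm---note $W\notin L^2(\R^N)$ for $N\le 4$), and the nonlinearity cancellation is \emph{not} the classical single-equation Pohozaev one: the cross terms $\int u_\ell^{\,2^*-1}(x\cdot\nabla u_j)$ with $\ell\ne j$ do not vanish individually and only disappear after the full $a_{ij}^{-1}$-weighted combination, which is precisely the non-trivial combinatorial computation carried out in Section~\ref{Pohozaev}.
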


Now let us discuss the case $k=2$. Here the number of degree of
freedom is $(k-1)k/2 =1$\linebreak[2] and the matrix
$A$ depends on a single parameter:
$A = \bigl(\begin{smallmatrix}
  \a &1-\a\\ 1-\a &\a
\end{smallmatrix}\bigr)$.  Its eigenvalues are given by
$\L_1=1$,
$\L_2=2\a-1$. It is easily seen that \eqref{eq:ibar}--\eqref{y9} are
verified with $\ibar =2$ and, in view of \eqref{cond}, with
$\a = \bar\a_{n}$ satisfying
\begin{equation}
  \bar\a_{n} = \frac{2n^2+2Nn-2n+N^2}{N(N+2)}
\end{equation}
so that the degeneracy occurs at a sequence of values $\a_n$ such that $\a_n\to +\infty$ as $n\to +\infty$.
Note that $\bar\a_{n}\ne\frac12$ for any $n\in\N$, which implies
\eqref{y10*}.  Hence Theorem
\ref{Th1} holds at the values $\a_n$ without additional assumptions and it becomes

\begin{theorem}\label{Th2}
  If $A = \bigl(\begin{smallmatrix} \a&1-\a\\
    1-\a&\a \end{smallmatrix}\bigr)$, then, for any $n\geq 2$,  the points
  $(\bar\a_{n},
  U,U)$ are radial bifurcation points for the curve of trivial solutions
  $(\a,U,U)$ to equation \eqref{y1}.  More precisely,
  there exist a continuously differentiable curve
  defined for $\e $ small enough
  \begin{equation}\label{fin1}
    (-\e_0,\e_0) \to \R \times \bigl(D^{1,2}_\rad(\R^N)\bigr)^2 :
    \e \mapsto \bigl(\a(\e), u_1(\e), u_2(\e)\bigr)
  \end{equation}
  passing through $(\bar\a_{n},U,U)$, i.e.,
  $\bigl(\a(0),u_1(0), u_2(0)\bigr) = (\bar\a_{n},U,U)$,
  such that, for all $\e \in (-\e_0,\e_0)$,
  $u_i(\e)$ is a radial solution to
  \begin{equation}
    \begin{cases}
      -\Delta u_1=\a u_1^{\frac {N+2}{N-2}} +(1-\a)u_2^{\frac {N+2}{N-2}}
      &\text{in }\R^N,\\[1\jot]
      -\Delta u_2=(1-\a)u_1^{\frac {N+2}{N-2}} +\a u_2^{\frac {N+2}{N-2}}
      &\text{in }\R^N,\\[1\jot]
      u_1,u_2>0 &\text{in }\R^N,\\
      u_1,u_2\in D^{1,2}(\R^N),
    \end{cases}
  \end{equation}
  with $\a = \a(\e)$. Moreover,
  \begin{equation}
    \begin{split}
      & u_1(\e)
      = U + \e W_n\left(|x|\right)
      + \e \phi_{1,\e}(|x|)\\
      &u_2(\e)
      = U - \e W_n\left(|x|\right)
      + \e \phi_{2,\e}(|x|)
      \end{split}
  \end{equation}
  where $W_n$ is the function defined in \eqref{eq:def-Wn}, and
  $\phi_{1,\e},\phi_{2,\e}$ are functions uniformly bounded in
  $D^{1,2}(\R^N)$ and such that $ \phi_{s,0}=0$ for $s=1,2$.
\end{theorem}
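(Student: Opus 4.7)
The plan is to derive Theorem~\ref{Th2} as a direct specialization of Theorem~\ref{Th1} to the case $k=2$; the task reduces to checking that the hypotheses of Theorem~\ref{Th1} are automatically satisfied for the one-parameter family $A(\alpha)=\bigl(\begin{smallmatrix}\alpha&1-\alpha\\ 1-\alpha&\alpha\end{smallmatrix}\bigr)$ at each value $\bar\alpha_n$.

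First, I would record the spectral data of $A(\alpha)$. Symmetry and the row-sum condition \eqref{y1a}--\eqref{y2} are evident. A direct diagonalization yields $\Lambda_1(\alpha)=1$ with eigenvector $(1,1)$ and $\Lambda_2(\alpha)=2\alpha-1$ with eigenvector $(1,-1)$. Choosing $\ibar=2$, condition \eqref{eq:ibar} reads $2\bar\alpha_n-1=\lambda_n$, i.e., $\bar\alpha_n=(1+\lambda_n)/2$; substituting $\lambda_n=(2n+N-2)(2n+N)/(N(N+2))$ and simplifying produces the explicit formula for $\bar\alpha_n$ stated in the theorem.

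Next I would verify the remaining hypotheses of Theorem~\ref{Th1}. Hypothesis \eqref{y8} is vacuous since, for $k=2$, no index lies outside $\{1,\ibar\}=\{1,2\}$. The transversality condition \eqref{y9} is immediate from $\dd{\Lambda_2}{\alpha}\equiv 2\ne 0$. For the invertibility assumption \eqref{y10*}, one computes $\det A(\alpha)=\alpha^2-(1-\alpha)^2=2\alpha-1=\Lambda_2(\alpha)$; at $\alpha=\bar\alpha_n$ this equals $\lambda_n>0$, so $A(\bar\alpha_n)$ is invertible (equivalently, $\bar\alpha_n\ne 1/2$).

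All hypotheses being met, Theorem~\ref{Th1} directly yields the existence of the $C^1$ curve \eqref{fin1} passing through $(\bar\alpha_n,U,U)$. To obtain the explicit expansion, I would pick an eigenvector $e_n$ of $A(\bar\alpha_n)$ associated with $\lambda_n$, namely $e_n=(1,-1)$ (the normalization is absorbable into the parameter~$\e$), and substitute it into formula \eqref{fin*}. The two components of $e_n W_n$ are then $+W_n(|x|)$ and $-W_n(|x|)$, which is precisely the asymptotic form claimed for $u_1(\e)$ and $u_2(\e)$. Since the argument is essentially a verification of hypotheses, no serious obstacle is anticipated; the only mild point worth flagging is that the restriction $n\ge 2$ is needed to avoid the trivial degeneracies at $\lambda_0=(N-2)/(N+2)$ and $\lambda_1=1$, which correspond to pure rescalings and translations of the bubble and do not produce new solutions.
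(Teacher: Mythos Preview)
Your proposal is correct and mirrors the paper's own argument: the paper derives Theorem~\ref{Th2} directly from Theorem~\ref{Th1} by computing $\Lambda_1=1$, $\Lambda_2=2\alpha-1$, observing that \eqref{eq:ibar}--\eqref{y9} hold with $\ibar=2$, and noting $\bar\alpha_n\ne\tfrac12$ for invertibility. Your verification that \eqref{y8} is vacuous for $k=2$, that $\det A(\bar\alpha_n)=\lambda_n>0$, and your choice $e_n=(1,-1)$ to read off the expansion from~\eqref{fin*} are exactly the steps the paper sketches in the discussion preceding the statement.
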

The same type of result was proved in \cite{GGW} for the Toda system
in~$\R^2$.

\subsection{Extensions and related problems}

A first interesting question that arises from Proposition \ref{lin-k}
concerns the existence of non-radial solutions.

\vskip 0.1cm plus 0.1cm \textit{%
  Question 1. Do exist nonradial solutions to \eqref{y1} bifurcating
  from the values $\a$ which verify \eqref{cond}? How
  many?}\vskip0.1cm

In analogy with the classification result of Jost-Wang (\cite{JW}),
it is possible to think that the number of solutions of \eqref{y1}
coincides with that of the linearized operator (see also
\cite{WZZ}). In our case, if for example $k=2$, we would have the
existence of at least
$\sum\limits_{h=0}^n \frac{(N+2h-2)(N+h-3)!}{(N-2)!\,h!}$ distinct
solutions.

\pagebreak[2]
\vskip 0.1cm plus 0.1cm

Another interesting question concerns the shape of the branch of our
solutions.

\vskip 0.1cm plus 0.1cm \textit{%
  Question 2.  What about the bifurcation diagram for $\a(\e)$
  close to $\a$?}
\vskip0.1cm

This question is quite delicate and the answer seems to strongly
depend on $A$. In Appendix~\ref{A}, we carry out the computation of
the first derivative of $\a(\e)$ with respect to $\e$. It is worth
noting that if $k=2$ then $\dd{\a}{\e}(0) = 0$ (then nothing can be
said about the behaviour of the branch) but, if $k>2$, we can have that
$\dd{\a}{\e}(0) \ne 0$. In this case, the bifurcation is
\textit{transcritical}.

\pagebreak[3]
\vskip 0.1cm \textit{%
  Question 3.  What possible extensions may be considered?}
\vskip0.1cm
Another interesting problem to which one can apply our techniques is
given by the Gross-Pitaevskii type systems, namely,
\begin{equation}
  \begin{cases}
    \displaystyle
    -\Delta u_i
    = \Biggl(\sum_{j=1}^ka_{ij}u_j^2\Biggr)^{\frac2{N-2}} u_i
    & \text{in }\R^N,\\
    u_i>0& \text{in }\R^N,\\
    u_i\in D^{1,2}(\R^N),
  \end{cases}
\end{equation}
When $A = \bigl(\begin{smallmatrix} 1&1\\
  1&1 \end{smallmatrix}\bigr)$,
this problem was studied in \cite{DH} as the limit problem for blowing up
solution on Riemannian manifolds. The authors proved that only the trivial
solution $(U,\dots,U)$ exist.

The paper is organized as follows.  In Section~\ref{sec:linearization},
we prove Theorem~\ref{lin-critico}
and we study the linearization of our system at the trivial
solution. In Section~\ref{sec:bifurcation} we define the functional
setting, we apply the
Crandall-Rabinowitz result, and we prove the Pohozaev identity getting
our bifurcation result, Theorem~\ref{Th1}. Finally in the appendix, we give
some examples of possible behaviour of the branches.

\section{Linearization at the standard bubble}
\label{sec:linearization}

\subsection{The case of a single equation}\label{s2}

Let us consider the critical equation \eqref{critico} and the
associated eigenvalue problem \eqref{eigenv}.
It is well known that the first eigenvalue to \eqref{eigenv} is given
by $\lambda_0 = \frac{N-2}{N+2}<1$ and the corresponding eigenfunction
is $U$, while the second eigenvalue is $\lambda_1 = 1$ and it has an
$N+1$ dimensional kernel spanned by
$\frac{\partial U}{\partial x_1}, \dots,\frac{\partial U}{\partial
  x_N}, x\cdot \nabla U + \frac{N-2}2U$
(see for example \cite{BE} or \cite{AGAP}).  In this section,
we compute all
eigenvalues and corresponding eigenfunctions of~\eqref{eigenv}.

\begin{proof}[Proof of Theorem \ref{lin-critico}]
  We decompose the  solutions to~\eqref{eigenv} using spherical
  harmonics:
  \begin{equation*}
    w(r,\theta) = \sum_{h=0}^{\infty} \psi_h(r)Y_h(\theta),
    \qquad \text{where }
    r=|x| , \  \theta=\frac{x}{|x|} \in \S^{N-1},
  \end{equation*}
  and
  \begin{equation*}
    \psi_h(r) = \int_{\S^{N-1}}w(r,\theta) Y_h(\theta)\intd\theta.
  \end{equation*}
  Here $Y_h(\theta)$ denotes a $h$-th spherical
  harmonic which satisfies:
  \begin{equation*}
    -\Delta_{\S^{N-1}} Y_h = \beta_hY_h
  \end{equation*}
  where $\Delta_{\S^{N-1}}$ is the Laplace-Beltrami operator on
  $\S^{N-1}$ with the standard metric and $\beta_h$ is the $h$-th
  eigenvalue of $-\Delta_{\S^{N-1}}$. It is known that
  \begin{equation*}
    \beta_h = h(N-2+h), \qquad h=0,1,2,\dots
  \end{equation*}
  and its multiplicity is
  \begin{equation*}
    m(\beta_h)= \frac{(N+2h-2)(N+h-3)!}{(N-2)!\,h!} .
  \end{equation*}
  By standard regularity theory, the function $w\in D^{1,2}(\R^N) $ is
  a solution of \eqref{eigenv} if and only if $\psi_h(r)$ is a weak
  solution of
  \begin{align}
    \label{1.8}
    \begin{cases}
      -\psi_h''(r) - \frac{N-1}{r}\psi_h'(r)
      + \frac{\beta_h}{r^2}\psi_h(r)
      = \lambda \frac{N(N+2)}{(1+r^{2})^2}\,\psi_h(r)\,,
      \quad\text{in } (0,\infty) \\[1\jot]
      \int_0^{+\infty}r^{N-1}|\psi_h'(r)|^2 \intd r< +\infty \\[1\jot]
     % \psi_h'(0)=0 \text{ if } h=0
      %\quad\text{and}\quad \psi_h(0)=0 \text{ if } h \ge 1.
    \end{cases}
  \end{align}
  We shall solve \eqref{1.8} using the following transformation
  \begin{equation*}
    \psi_h(r)=r^{-\frac {N-2}2}B_h(r).
  \end{equation*}
  The function $B_h$ solves
  \begin{equation}
    \label{a1*}
    -B_h''(r)-\frac 1r B_h'(r)
    =- \nu_h^2 \frac 1{r^2}B_h(r)
    + \frac{\lambda N(N+2)}{(1+r^2)^2}B_h(r)
    \quad\text{in } (0,\infty)
  \end{equation}
  where $\nu_h := h + \frac{N-2}{2}$,
  and
  \begin{equation}\label{1.8b}
    \int_0^{\infty}r\left(B_h'(r)\right)^2+r^{-1}
    \bigl(B_h(r)\bigr)^2 \intd r <\infty.
  \end{equation}
  The proof of \eqref{a1*} is a straightforward computation, so let us
  prove \eqref{1.8b}. By the definition of $\psi_h$ we get
  \begin{equation}\nonumber
    r^{N-1}|\psi_h'|^2
    = r(B'_h)^2+\left(\frac{N-2}2\right)^2\frac{B_h^2}r
    - (N-2) B_h'B_h.
  \end{equation}
  Now, integrating between $\e_n$ and $R_n$ (the sequences $\e_n$
  and $R_n$ will be chosen later) we get
  \begin{equation*}
    \int_{\e_n}^{R_n}r^{N-1}|\psi_h'|^2 \intd r
    = \int_{\e_n}^{R_n} r(B'_h)^2
    + \left(\frac{N-2}2\right)^2 \frac{B_h^2}r  \intd r
    - \frac{N-2}2 \bigl(B_h^2(R_n) - B_h^2(\e_n)\bigr).
  \end{equation*}
  Since $\psi_h \in L^{2^*}\bigl([0,\infty); r^{N-1}\intd r\bigr)$,
  there exist sequences $\e_n \to 0$ and $R_n \to +\infty$ such that
  \begin{equation}\label{1-bis}
    R_n^{\frac{N-2}2} \abs{\psi_h(R_n)} \to 0\  \text{and }
    \e_n^{\frac {N-2}2} \abs{\psi_h(\e_n)} \to 0
    \quad\text{ as }n\to +\infty.
  \end{equation}
  Finally, by \eqref{1-bis}, we deduce that
  $B_h(R_n)$ and $B_h(\e_n)$ go to zero and this, together with
  \eqref{1.8}, gives \eqref{1.8b}.

  Then, from Lemma~2.4 of~\cite{GGN}, we have that $B_h(0)=0$ and
  \begin{equation}
    \label{asimpt-zero}
    B_h(r) = O\bigl(r^{\nu_h}\bigr)
    \quad\text{as } r \to 0.
  \end{equation}
  Let us show that an analogous estimate holds at infinity. To do this set
  $C(r) := B_h\bigl(\frac{1}{r}\bigr)$. So the claim follows if we prove
  that $C$ is bounded near the origin. A straightforward computation
  proves that $C$ satisfies again \eqref{a1*} and \eqref{1.8b}.
  % \begin{equation}\label{a2}
  %   \int_0^{\infty}rC'(r)^2 + \frac{C(r)^2}{r} \intd r <\infty.
  % \end{equation}
  Then by (2.28) of Lemma 2.4 of \cite{GGN} we get that
  \begin{equation}\nonumber \label{a3*}
    C(r) = O\bigl(r^{\nu_h}\bigr)
    \quad\text{as } r \to 0,
  \end{equation}
  which gives
  \begin{equation}\label{asimpt-infty}
    B_h(r) = O\bigl(r^{-\nu_h}\bigr)
    \quad\text{as } r \to +\infty
  \end{equation}
  and so the claim follows.

  Setting $\xi := \frac{1-r^2}{1+r^2}$, letting $R_h(\xi) := B_h(r)$, and
  using the definition of $\beta_h$ we have
  \begin{equation}
    \label{Legendre}
    \frac{\partial}{\partial \xi}
    \left((1-\xi^2)\frac{\partial R_h}{\partial\xi}\right)
    + \left( \lambda \frac{N(N+2)}{4} - \frac{\nu_h^2}{1-\xi^2}
    \right)R_h(\xi)
    = 0
  \end{equation}
  for $-1< \xi< 1$.
  % The boundary condition reads as $R_h(1)=B_h(0)$
  % and $R_h(-1)=B_h(+\infty)$ which are bounded by  \eqref{asimpt-zero}
  % and \eqref{asimpt-infty}.
  Now, let us set
  $A_h(\xi) := (1-\xi^2)^{-\nu_h/2} R_h(\xi)$. Then $A_h(\xi)$
  solves, for $\xi\in (-1,1)$,
  \begin{equation}\label{a3}
    (1-\xi^2)A_h''(\xi) - 2 (1+\nu_h) \xi A_h'(\xi)
    + \left[\frac{N(N+2)}4\lambda -\nu_h^2-\nu_h \right]A_h(\xi) = 0.
  \end{equation}
  This is a particular case of the Jacobi equation
  \begin{equation}\nonumber
    (1-\xi^2)y'' + \bigl[\gamma-\beta -(2+\beta+\gamma)\xi\bigr] y'
    + m(1+\beta+\gamma+m)y = 0
  \end{equation}
  with $\beta = \gamma = \nu_h$.
  From \eqref{asimpt-zero} we have that
  $r^{-\nu_h}B_h(r) = R_h(\xi)
  \Bigl(\frac{1+\xi}{1-\xi}\Bigr)^{{\nu_h}/2}$
  is bounded near $\xi=1$, while from \eqref{asimpt-infty} we have
  that
  $r^{\nu_h}B_h(r) = R_h(\xi) \Bigl(\frac{1-\xi}{1+\xi}\Bigr)^{{\nu_h}/2}$
  is bounded near $\xi=-1$. This implies that the solution $A_h(\xi)$
  of \eqref{a3} is bounded at $\xi=\pm 1$.

  It is known that \eqref{a3} admits a bounded solution if and only if
  \begin{equation}\nonumber
    \frac{N(N+2)}4 \lambda - \nu_h^2 - \nu_h = m(m + 2\nu_h + 1)
  \end{equation}
  for $m=0,1,\dots$\@  Inserting the definition of $\nu_h$ we get that
  \begin{equation}\nonumber
    \lambda =\frac4{N(N+2)}\left((m+h)^2+(m+h)(N-1)+\frac{N(N-2)}4\right)
  \end{equation}
  and, setting $n := m+h\in\N$, we derive
  \begin{equation}\nonumber
    \lambda =\frac{(2n+N-2)(2n+N)}{N(N+2)}.
  \end{equation}
  Moreover the bounded solutions
  of \eqref{a3} corresponding to $m(m+2\nu_h+1)$ are given by
 multiples of
  $A_{m,h}(\xi)=P_m^{(\nu_h, \nu_h)}(\xi)$ where $P_m^{(\gamma, \beta)}$ are
  the Jacobi polynomials:
  \begin{equation*}
    P_m^{(\beta, \gamma)}(\xi)
    = \sum_{s=0}^m \binom{m+\beta}{s} \binom{m+\gamma}{m-s}
    \left(\frac{\xi-1}2\right)^{m-s} \left(\frac{\xi+1}2\right)^s .
  \end{equation*}
  The polynomials $P_m^{(\nu_h, \nu_h)}$ are also known as the
  \textit{Gegenbauer polynomials} or the \textit{ultraspherical
    polynomials}.  They form a basis of the space
  $L^2\bigl((-1,1); (1-\xi^2)^{\nu_h}\intd \xi\bigr)$ (see p.~202 in
  \cite{LTWZ} for example).  Moreover we have that
  \begin{align*}
    R_{m,h}(\xi)
    &= (1-\xi^2)^{\nu_h/2} \, P_m^{(\nu_h, \nu_h)} (\xi), \\
    B_{m,h}(r)
    &= \frac{(2r)^{\nu_h}}{(1+r^2)^{\nu_h}}
      P_m^{(\nu_h, \nu_h)} \biggl(\frac{1-r^2}{1+r^2}\biggr)
  \end{align*}
  and
  \begin{equation*}
    \psi_{m,h}(r)
    = \frac{2^{h+\frac{N-2}{2}} \, r^h}{(1+r^2)^{h+\frac{N-2}{2}}}
    P_m^{(h+\frac{N-2}2, h+\frac{N-2}2)} \biggl(\frac{1-r^2}{1+r^2}\biggr).
  \end{equation*}
  Finally, recalling that $n=m+h$ then \eqref{sol-lin} follows. The
  multiplicity \eqref{mult} then follows counting the multiplicity of
  the spherical harmonics.
\end{proof}

\begin{remark}
  Note that Theorem \ref{lin-critico} also holds when $N=2$. In this
  case we have that $\l_n=\frac {n(n+1)}2$. These are exactly the
  eigenvalues associated to the linearization of the classical
  Liouville problem
  \begin{equation*}
    \begin{cases}
      \displaystyle
      -\Delta U = {\operatorname e} ^U& \text{in }\R^2\\
      \int_{\R^2} {\operatorname e}^U \intd x<\infty
    \end{cases}
  \end{equation*}
  at the standard bubble
  $U_{\R^2}(x)=\log\frac{64}{(8+|x|^2)^2}$. This result was proved
  in~\cite{GG} (see Theorem~11.1) and the corresponding eigenfunctions
  are spanned by
  $(P_L)^h_{n-h}\left(\frac{8-r^2}{8+r^2}\right) Y_h(\theta)$ where
  $(P_L)$ are the Legendre polynomials and $Y_h(\theta)$ are the
  spherical harmonics in $\R^2$. Observe that for $N=2$ the Jacoby
  polynomials
  $P_{n-h}^{\left(h+\frac{N-2}2,
      h+\frac{N-2}2\right)}\left(\frac{1-r^2}{1+r^2}\right)$
  in \eqref{sol-lin} become the Legendre polynomials and so Theorem
  \ref{lin-critico} contains also the result of \cite{GG} for~$\R^2$.
\end{remark}

\subsection{The case of the system}\label{s3}

Now we are in position to prove Proposition \ref{lin-k} in the
Introduction.
\begin{proof}[Proof of Proposition \ref{lin-k}]
  Because $A$ is symmetric, there exists an orthogonal matrix
  $B=(b_{ij})$ such that
  \begin{equation}\label{y4}
    B^{-1}AB = \L
  \end{equation}
  and $\L$ is the diagonal matrix with the eigenvalues
  $(\L_1,\dotsc,\L_k)$ as diagonal elements.
  Let $w_i := \sum_{j=1}^kb_{ij}^{-1}v_j = \sum_{j=1}^kb_{ji}v_j$.
  Then $w=(w_1,\dots,w_k)$ is a solution to
  \begin{equation} \label{y5}
    \forall i=1,\dots,k,\quad
    \begin{cases}
      -\Delta w_i =\frac{N(N+2)}{\left(1+|x|^2\right)^2}\L_i w_i
      & \text{in }\R^N,\\[1\jot]
      w_i \in D^{1,2}(\R^N).
    \end{cases}
  \end{equation}
  By our choice of the matrix $B$ in \eqref{y4} the $k$ equations in
  \eqref{y5} are decoupled and so we can solve them independently.
  Remember that we have set $\L_1 = 1$.  So the first equation
  of~\eqref{y5} reads
  \begin{equation}\label{w_1}
    {-\Delta} w_1=\frac{N(N+2)}{(1+|x|^2)^2} w_1 \quad \hbox{ in }\R^N
  \end{equation}
  and it is well known it
  admits a nontrivial solution which is a linear combination of
  $\frac{\partial U}{\partial x_i}$ for $i=1,\dots,N$ and
  $x\cdot \nabla U+\frac{N-2}2 U$.
  It is important to observe that equation \eqref{w_1} does not depend
  on $\a$ and so system \eqref{y5} has the solution in \eqref{1.16}
  for every value of~$\a$.

  The other equations in \eqref{y5} have a
  nontrivial solution if and only if $\L_i$ is an eigenvalue of
  problem \eqref{eigenv}, i.e., using Theorem \ref{lin-critico} if and
  only if \eqref{cond} is satisfied for some $i=2,\dots,k$, for some
  $n\in \N$ and for some value of $\a\in \R$. When \eqref{cond} is
  satisfied the the $i$-th equation in system \eqref{y5} has as a
  solutions a linear combination of the eigenfuntions of
  \eqref{eigenv} related to the eigenvalue $\l_n$, and so \eqref{eq:eigenfun}
  follows.
\end{proof}
%Our bifurcation results will use Theorem~1.7 of Crandall and Rabinowitz in \cite{CRJFA}.
One of the main hypothesis of the bifurcation result is that the
kernel of the linearized operator has to be one dimensional.  From the
previous result, we know that the linearized operator has instead a
very rich kernel. To overcome this problem we consider only the case
of radial solutions in $D^{1,2}(\R^N)$, that is, we will work in the space
$D^{1,2}_\rad(\R^N)$.  First we state the result of Proposition
\ref{lin-k} in this radial setting.
\begin{corollary}\label{corb}
  Equation~\eqref{linearization-2} possesses a \textit{radial} solution
  $v = (v_1,\dotsc,v_k) \ne (0,\dotsc,0)$ if and only if
  $\L_i = \lambda_n$
  for some $i \in \{1,\dots,k\}$ and $n\in\N$, in which case the
  associated radial solutions are given by
  $v= c W_n$ where $c \in \R^k$ is an eigenvector associated to the eigenvalue
  $\L_i$ and
  \begin{equation}\label{eq:def-Wn}
    W_n(|x|)
    := \frac{1}{(1+|x|^2)^{\frac{N-2}2}} \,
    P_{n}^{\left(\frac{N-2}{2}, \frac{N-2}{2}\right)}
    \biggl(\frac{1-|x|^2}{1+|x|^2}\biggr).
  \end{equation}
  Here
  \begin{math}
    P_{n}^{\left(\frac{N-2}{2}, \frac{N-2}{2}\right)}(\xi)
    = \sum\limits_{s=0}^n
    \binom{n+\frac{N-2}2}{s} \binom{n+\frac{N-2}2}{n-s}
    \bigl(\frac{\xi-1}2\bigr)^{n-s} \bigl(\frac{\xi+1}2\bigr)^s
  \end{math}. %
  If several equalities of the form $\L_i = \lambda_n$ hold at the same
  time, the set of radial solutions to~\eqref{linearization-2}
  is the linear span of the associated solutions of
  the form~\eqref{eq:def-Wn}.

  In particular, if $\L_1 = 1$ is a simple eigenvalue of $A$
  (which is the case under
  assumption~\eqref{y8}), all corresponding radial solutions
  to equation~\eqref{linearization-2} are given by multiples~of
  \begin{equation}\label{eq:def-W}
    (1,\dotsc, 1) \, W
    \quad\text{where }
    W(|x|) := x\cdot \nabla U + \tfrac{N-2}{2} U
    = d \, \frac {1-|x|^2}{(1+|x|^2)^{N/2}}
  \end{equation}
  and $d = \tfrac{1}{2} N^{(N-2)/4} (N-2)^{(N+2)/4}$.
\end{corollary}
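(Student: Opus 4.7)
The plan is to deduce Corollary \ref{corb} directly from Proposition \ref{lin-k} by imposing the radiality constraint. Proposition \ref{lin-k} describes every nontrivial solution of the linearized system \eqref{linearization-2} as a linear combination $\sum_{h=0}^n c_h w_{n,h}$ of the functions \eqref{sol-lin}, where each $w_{n,h}$ carries the angular factor $Y_h(\theta)$.

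First, I would observe that $Y_h$ is rotation-invariant (constant on $\S^{N-1}$) precisely when $h = 0$; for $h \geq 1$ the $Y_h$'s are orthogonal to the constants in $L^2(\S^{N-1})$ since they are eigenfunctions of $-\Delta_{\S^{N-1}}$ for strictly positive eigenvalues $\beta_h = h(N-2+h)$. Hence a radial element of $\sum c_h w_{n,h}$ must have $c_h = 0$ for every $h \geq 1$, and the combination collapses to $v = c_0 w_{n,0}$. Reading off $w_{n,0}$ from \eqref{sol-lin} and absorbing the constant $Y_0$ into $c_0 \in \R^k$ produces exactly $c\, W_n(|x|)$ with $W_n$ as in \eqref{eq:def-Wn}. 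The eigenvector condition $A c = \Lambda_i c = \lambda_n c$ is inherited verbatim from Proposition \ref{lin-k}, and the linear-span statement when several equalities $\Lambda_i = \lambda_n$ occur simultaneously follows identically by superposition.

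For the ``in particular'' clause I would specialize to $n = 1$, so that $\lambda_1 = 1$: by \eqref{y2}, $(1,\dotsc,1)$ is always an eigenvector of $A$ for the eigenvalue $1$, and the simplicity of this eigenvalue (guaranteed by \eqref{y8}) forces the eigenspace to be exactly $\R \cdot (1,\dotsc,1)$, so $c_0 = \mu(1,\dotsc,1)$ for some scalar. It remains to identify the radial eigenfunction $W_1$ with the explicit expression $d(1-|x|^2)/(1+|x|^2)^{N/2}$. This is a short routine calculation: using $P_1^{(\beta,\beta)}(\xi) = (\beta+1)\xi$ with $\beta = (N-2)/2$ to simplify $W_1$, and independently differentiating $U$ from \eqref{sol} to evaluate $x\cdot\nabla U + \tfrac{N-2}{2}U$, one matches the two expressions and reads off the stated constant $d = \tfrac12 N^{(N-2)/4}(N-2)^{(N+2)/4}$.

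The argument is essentially bookkeeping; there is no serious obstacle. The single conceptual point to invoke is the $L^2(\S^{N-1})$ orthogonal decomposition underlying Theorem \ref{lin-critico}, which identifies \emph{radiality} with vanishing of every mode $h \geq 1$. Once this is made explicit, the statement of Corollary \ref{corb} reduces to picking out one summand in the description of the kernel furnished by Proposition \ref{lin-k}.
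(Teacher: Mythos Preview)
Your proposal is correct and follows essentially the same route as the paper: both restrict the description of the kernel from Proposition~\ref{lin-k} (ultimately Theorem~\ref{lin-critico}) to the radial setting by selecting the $h=0$ term in~\eqref{sol-lin}, and then specialize to $n=1$ for the last clause. Your version is simply more explicit about why only $h=0$ survives (orthogonality of spherical harmonics) and about the identification of $W_1$ with $W$, points the paper leaves to the reader.
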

\begin{proof}
  Restricting to the radial setting we have that equation \eqref{w_1}
  admits only the solution in \eqref{eq:def-W}.  From Theorem
  \ref{lin-critico} instead follows that any eigenvalue $\l_n$ of the
  critical problem admits one radial solution wich is the one in
  \eqref{sol-lin} that correspond to $h=0$. Then \eqref{eq:def-Wn} follows
  from \eqref{eq:eigenfun}.
\end{proof}

\section{The bifurcation result}
\label{sec:bifurcation}

\subsection{The functional setting}

As mentioned in the introduction, the proof of our bifurcation result requires an appropriate functional setting which is a delicate part of
the proof.

Both for the lack of differentiability of $u \mapsto u^+$ and the
difficulty of proving that the solution $(u_1,\dots,u_k)$ is positive,
we need to restrict to a subset of $D^{1,2}(\R^N)$ with
a stronger topology.
As before let
$D^{1,2}_\rad(\R^N) = \bigl\{u\in D^{1,2}(\R^N) \bigm|
u=u(|x|)\bigr\}$ and set
\begin{equation*}
  D := \Bigr\{u\in L^{\infty}(\R^N) \Bigm|
  \sup\limits_{x\in\R^N}\frac{|u(x)|}{U(x)}<+\infty \Bigr\}
\end{equation*}
endowed with the norm $\norm{u}_D := \sup_{x\in\R^N}\frac{|u(x)|}{U(x)}$
and we define
\begin{equation*}
  X = D^{1,2}_\rad(\R^N)\cap D.
\end{equation*}
Then $X$ is a Banach space equipped with the norm
$\norm{u}_X := \max\{\norm{u}_{1,2} ,\norm{u}_D\}$ where
$\norm{u}_{1,2}$ is the classical norm on $D^{1,2}(\R^N)$.

To rule out the degeneracy due to the invariance under dilations of
Problem~\eqref{y1}, we will solve the linearized equation in
the subspace of functions that are
orthogonal in $\bigl(D^{1,2}_\rad(\R^N)\bigr)^k$ to
$(1,...,1) W(|x|)$ defined in \eqref{eq:def-W}.  Let $P_ {K}$ be the
orthogonal projection (with respect to the inner product of
$\bigl(D^{1,2}(\R^N)\bigr)^k$)
from $X^k$ onto the subspace $K$
given by
\begin{equation}\label{K}
  K := \biggl\{ g\in X^k \biggm|
  \sum_{i=1}^k \, \int_{\R^N}
  \nabla W \cdot \nabla g_i(x) \intd x = 0
  \biggr\}.
\end{equation}
\begin{definition}
  Let us denote by
  $B := \bigl\{ u\in X \bigm| \norm{u - U}_X < \frac1{2} \bigr\}$ and define
  the operator
  \begin{equation*}
    T : \R \times (K\cap B^{k}) \to K
  \end{equation*}
  as
  \begin{equation}
    \label{T}
    T(\a, u_1,\dots, u_k) :=
    P_K
    \begin{pmatrix}
      \displaystyle
      u_1 - (-\Delta)^{-1} \sum_{j=1}^k a_{1j}(\a)\, u_j^{2^*-1}\\
      \vdots\\
      \displaystyle
      u_k - (-\Delta)^{-1} \sum_{j=1}^k a_{kj}(\a)\, u_j^{2^*-1}\\
    \end{pmatrix}
  \end{equation}
  Note that, since $u_i \in B$,
  $u_i = U + (u_i - U) > \frac{1}{2} U$
  is positive so that $u_i^{2^*-1}$
  is well defined for any $N \ge 3$.
\end{definition}

The zeros of the operator $T$ satisfy
\begin{equation}
  \label{2-rev}
  \begin{cases}
    \displaystyle
    -\Delta u_i = \sum_{j=1}^k a_{ij}(\a) \, u_j^{2^*-1}
    + L \, \frac{N(N+2)}{(1+|x|^2)^{2}} \, W
    & \text{in }\R^N \text{, for } i=1,\dotsc,k,\\
    u = (u_1, u_2, \dotsc, u_k) \in K\cap B^{k},
  \end{cases}
\end{equation}
where $L=L(u)\in \R$ is a Lagrange multiplier. Once we prove the
existence of $(u_1,\dots,u_k)$ that satisfies \eqref{2-rev}, the final
step will be to show that $L=0$ so that $u$ is indeed a solution
to~\eqref{y1} and this will be done in the next section using a
\textit{Pohozaev} identity.

First we prove some properties of the operator $T$.

\begin{lemma}\label{cont-PK}
  The projector $P_K : X \to X$ is well defined and continuous.
\end{lemma}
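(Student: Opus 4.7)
The plan is to observe that the constraint defining $K$ is a single linear equation, so $P_K$ is an orthogonal projection onto a closed hyperplane of the Hilbert space $\bigl(D^{1,2}_\rad(\R^N)\bigr)^k$. Setting $W_{\mathrm{all}} := (W,\dotsc,W)$, the defining condition of $K$ reads $\langle W_{\mathrm{all}},g\rangle_{(D^{1,2})^k} = 0$, hence the well-known formula
\begin{equation*}
  P_K(g) = g - \frac{\langle W_{\mathrm{all}}, g\rangle_{(D^{1,2})^k}}{\|W_{\mathrm{all}}\|_{(D^{1,2})^k}^2}\, W_{\mathrm{all}}.
\end{equation*}
Thus $P_K = \mathrm{Id} - \ell(\cdot)\, W_{\mathrm{all}}$ is an identity plus a rank-one operator; the continuity question reduces to checking that both $W_{\mathrm{all}}$ belongs to the stronger space $X^k$ and that the linear functional $\ell$ extends continuously from $(D^{1,2})^k$ to $X^k$.

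Next I would verify $W \in X$. From~\eqref{eq:def-W}, $W(|x|) = d\,(1-|x|^2)/(1+|x|^2)^{N/2}$ is radial and smooth, belongs to $D^{1,2}_\rad(\R^N)$ (it is the well-known dilation generator, an eigenfunction of the linearized operator at $U$), and the pointwise bound $|W(x)| \le C\,U(x)$ holds because both functions are bounded near the origin and both decay like $|x|^{2-N}$ at infinity. Hence $W \in D$, so $W_{\mathrm{all}} \in X^k$.

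For the functional $\ell(g) = \sum_i \int_{\R^N} \nabla W \cdot \nabla g_i\intd x$, Cauchy–Schwarz gives $|\ell(g)| \le \|W_{\mathrm{all}}\|_{(D^{1,2})^k}\,\|g\|_{(D^{1,2})^k}$, and by the very definition of the norm on $X$ one has $\|g\|_{(D^{1,2})^k} \le \|g\|_{X^k}$. Combining this with the previous step,
\begin{equation*}
  \|P_K(g)\|_{X^k}
  \le \|g\|_{X^k} + \frac{|\ell(g)|}{\|W_{\mathrm{all}}\|_{(D^{1,2})^k}^2}\,\|W_{\mathrm{all}}\|_{X^k}
  \le \Bigl(1 + \frac{\|W_{\mathrm{all}}\|_{X^k}}{\|W_{\mathrm{all}}\|_{(D^{1,2})^k}}\Bigr)\|g\|_{X^k},
\end{equation*}
which gives both well-definedness $P_K(X^k) \subseteq X^k$ and continuity.

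There is no genuine obstacle beyond the elementary pointwise comparison $|W| \le C\,U$, which is the only place where the specific form of $W$ (as opposed to an abstract element of $D^{1,2}$) is used; the rest is the standard rank-one perturbation argument that carries an orthogonal projection defined in a Hilbert setting over to any continuously embedded subspace containing the one-dimensional orthogonal complement.
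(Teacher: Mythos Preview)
Your proof is correct and follows essentially the same route as the paper: write $P_K$ explicitly as the identity minus the rank-one projection onto $(W,\dots,W)$, observe from~\eqref{eq:def-W} that $|W|\le C\,U$ so that $W\in X$, and conclude continuity in the $X^k$-norm from the continuity of the $D^{1,2}$-inner product on~$X$. Your presentation is in fact more detailed than the paper's, which dispatches the argument in three sentences.
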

\begin{proof}
  Let $W^* := W / \norm{W}_{1,2}$.  One can write
  $P_K u = u - (1,\dotsc,1) W^* \sum_{i=1}^k (u_i|W^*)_{D^{1,2}} $.
  From \eqref{eq:def-W}, one easily
  shows that there exists a $C \in \R$ such that $\abs{W^*} \le C U$
  and so $\norm{W^*}_X < +\infty$.  The statement readily follows from
  these facts.
\end{proof}

\begin{lemma}\label{lemma1}
  The operator $T$ in \eqref{T} is continuous from
  $\R \times (K\cap B^{k})$ into $K $ and its derivatives
  $\partial_{\a} T$, $\partial_u T$ and $\partial_{\a u}T$ exist
  and are continuous.
\end{lemma}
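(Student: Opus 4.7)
The plan is to decompose $T$ as $T(\a, u) = P_K\bigl(u - \mathcal{N}(\a, u)\bigr)$, where
\begin{equation*}
  \mathcal{N}(\a, u)_i := (-\Delta)^{-1}\Bigl(\sum_{j=1}^k a_{ij}(\a) \, u_j^{2^*-1}\Bigr).
\end{equation*}
Since $P_K$ is linear and continuous by Lemma~\ref{cont-PK}, and the path $\a \mapsto A(\a)$ is $\C^1$ by hypothesis, it is enough to show that $\mathcal{N} : \R \times B^k \to X^k$ is of class $\C^1$ and admits a continuous mixed second derivative $\partial_{\a u}\mathcal{N}$.

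The starting observation is the pointwise bound: $u \in B^k$ entails $\frac{1}{2} U \le u_i \le \frac{3}{2} U$, because $\norm{u_i - U}_D \le \frac{1}{2}$ translates into $|u_i - U| \le \frac{1}{2} U$ everywhere. Hence each $u_i$ is bounded away from zero, the map $s \mapsto s^{2^*-1}$ is smooth on the range covered by $u_i$, and $|u_i^{2^*-1}| \le C\, U^{2^*-1}$ for a constant depending only on $N$. Combining the identity $-\Delta U = U^{2^*-1}$ with the comparison principle for the Newtonian potential yields $\bigl|(-\Delta)^{-1}(u_j^{2^*-1})\bigr| \le C \, U$, which controls the $D$-norm; the $D^{1,2}$-norm is controlled via the boundedness of $(-\Delta)^{-1} : L^{2N/(N+2)} \to D^{1,2}(\R^N)$, noting that $\norm{u_j^{2^*-1}}_{L^{2N/(N+2)}} = \norm{u_j}_{L^{2^*}}^{2^*-1}$ is uniformly bounded on $B^k$. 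Radiality is preserved because $(-\Delta)^{-1}$ commutes with rotations. Continuity of $\mathcal{N}$ in $(\a, u)$ then follows from dominated convergence applied with the dominating function $U^{2^*-1}$, combined with the $\C^1$ dependence of $a_{ij}(\a)$.

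For Fréchet differentiability in $u$, Taylor's formula with integral remainder gives, pointwise,
\begin{equation*}
  (u_j + v_j)^{2^*-1} - u_j^{2^*-1} - (2^*-1)\, u_j^{2^*-2}\, v_j
  = v_j^2 \int_0^1 (1-\theta)(2^*-1)(2^*-2)(u_j + \theta v_j)^{2^*-3}\intd\theta
\end{equation*}
for $v$ small enough that $u + v \in B^k$. Since $u_j + \theta v_j$ stays between $\frac{1}{2} U$ and $\frac{3}{2} U$, the remainder $R_j$ satisfies $|R_j| \le C\, v_j^2\, U^{2^*-3} \le C\, \norm{v}_X^2 \, U^{2^*-1}$, so the same pointwise-plus-$L^p$ estimates as above yield $\norm{(-\Delta)^{-1} R_j}_X \le C \norm{v}_X^2$. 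This identifies the candidate derivative
\begin{equation*}
  \partial_u \mathcal{N}(\a, u)[v]_i
  = (2^*-1)(-\Delta)^{-1}\Bigl(\sum_{j=1}^k a_{ij}(\a)\, u_j^{2^*-2}\, v_j\Bigr),
\end{equation*}
and shows it is the actual derivative. The derivative $\partial_\a \mathcal{N}$ is immediate from the $\C^1$ dependence of $a_{ij}(\a)$, and $\partial_{\a u}\mathcal{N}$ is obtained by combining the two. Continuity of all three derivatives follows from entirely analogous pointwise estimates (the integrand in the Taylor remainder depends continuously on $u$, again dominated by a multiple of $U^{2^*-1}$).

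The main obstacle is coupling the two components of the $X$-norm: the $D^{1,2}$-part can be handled by standard Sobolev duality, but the $D$-part genuinely requires pointwise control through the explicit domination by $C\, U^{2^*-1}$ together with the comparison principle for $(-\Delta)^{-1}$ applied against $-\Delta U = U^{2^*-1}$. This is precisely the reason for introducing $X$ rather than working in $D^{1,2}(\R^N)$ alone. Once this pointwise mechanism is in place, differentiability is almost automatic because $u_j$ stays bounded away from zero on $B$, removing the non-differentiability of $u \mapsto u^+$ noted in the introduction.
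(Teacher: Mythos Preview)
Your proof is correct and follows essentially the same approach as the paper's: pointwise domination by $C\,U^{2^*-1}$ combined with the identity $(-\Delta)^{-1}U^{2^*-1}=C\,U$ handles the $D$-part, while the boundedness of $(-\Delta)^{-1}:L^{2N/(N+2)}\to D^{1,2}$ handles the $D^{1,2}$-part. The paper actually omits the differentiability argument (``follows in a very similar way and we omit~it''), so your explicit Taylor expansion with integral remainder---valid because $u_j$ is bounded between $\tfrac12 U$ and $\tfrac32 U$, keeping the $(2^*-3)$-power under control even when the exponent is negative (i.e., $N>6$)---is in fact more complete than the original.
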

\begin{proof}
  Since $u_j$ belongs to $X$, we have that
  $u_j^{2^*-1} \in L^{\frac{2N}{N+2}}_\rad(\R^N)$ for
  any $j=1,\dots,k$. As a consequence, there exists a unique
  $g_i\in D^{1,2}_\rad(\R^N)$ for $i=1,\dots,k$ such that $g_i$ is a
  weak solution to $-\Delta g_i=f_i$ in $\R^N$ where
  \begin{equation}\label{eq:fi}
    f_i := \sum_{j=1}^k a_{ij}(\a)\, u_j^{2^*-1}
  \end{equation}
  The solution $g_i$ enjoys the following representation:
  \begin{equation*}
    g_i(x) = \frac 1{\omega_N(N-2)}
    \int_{\R^N}\frac 1{|x-y|^{N-2 }}f_i(y)\intd y
    \text{,}
  \end{equation*}
  where $\omega_N$ is the area of the unit sphere in $\R^N$.  By
  assumption $u_{i}\in B$ and this implies that
  $|f_i(x)|\leq CU^{2^*-1} (x)$ so that
  \begin{equation*}
    \abs{g_{i}(x)}
    \le C \int_{\R^N}\frac 1{|x-y|^{N-2}} U^{2^*-1}(y) \intd y
    = C U(x)
  \end{equation*}
  and $g_i\in X$.  (Different occurrences of $C$ may denote
  different constants.)
  To prove the continuity of $T$ in $K\cap B^{k}$, let
  $\a_n \to \a$ in $\R$ and
  $u_{i,n} \to u_i$ in
  $X$ (for $i=1,\dots,k$) as $n \to \infty$, and set
  \begin{equation*}
    g_{i,n}
    := (-\Delta)^{-1} f_{i,n}
    \quad\text{where }
    f_{n,i}
    := \sum_{j=1}^k a_{ij}(\a_n)\, u_{j,n}^{2^*-1} .
  \end{equation*}
  Since $u_{i,n} \to u_i$ in $D^{1,2}(\R^N)$, the convergence also
  holds in $L^{2^*}(\R^N)$.  Using Lebesgue's dominated convergence
  theorem and its converse, one deduces that $f_{i,n} \to f_i$ in
  $L^{\frac{2N}{N+2}}$ where $f_i$ is defined as in~\eqref{eq:fi}.
  Therefore $g_{i,n} \to g_i$ in $D^{1,2}$ and
  $T(\a_n, u_n) \to T(\a, u)$ in~$D^{1,2}$.

  Now let us show
  the convergence in $D$.
  We have that
  \begin{align*}
    \frac{|g_{i,n}(x)-g_i(x)|}{U(x)}
    &\le \frac{1}{\omega_N(N-2)U(x)}
      \int_{\R^N}\frac{1}{|x-y|^{N-2 }}
      \frac{\abs{f_{i,n}(y) - f_i(y)}}{U(y)^{2^*-1}}
      U(y)^{2^*-1} \intd y\\
    &\le C\sup_{y \in \R^N}
      \frac{\abs{f_{i,n}(y) - f_i(y)}}{U(y)^{2^*-1}}.
  \end{align*}
  Moreover,
  \begin{multline*}
    \sup_{y \in \R^N}
    \frac{\abs{f_{i,n}(y) - f_i(y)}}{U(y)^{2^*-1}}
    \le \sum_{j=1}^k \abs{a_{ij}(\a_n) - a_{ij}(\a)}
    \sup_{y \in \R^N} \biggl( \frac{\abs{u_j(y)}}{U(y)} \biggr)^{2^*-1}\\
    + \sum_{j=1}^k \abs{a_{ij}(\a_n)} \sup_{y \in \R^N}
    \biggabs{ \biggl(\frac{u_{j,n}(y)}{U(y)} \biggr)^{2^*-1}
      - \biggl( \frac{u_j(y)}{U(y)} \biggr)^{2^*-1} } .
  \end{multline*}
  The first term goes to $0$ because $a_{ij}(\a_n) \to a_{ij}(\a)$ and
  $\abs{u_j} \le C U$.  As
  $\bigl(a_{ij}(\a_n)\bigr)_n$ are bounded sequences,
  it is enough to show that the
  last factor goes to~$0$.  This is the case because, thanks to
  the convergence in $D$,
  ${u_{j,n}}/{U} \to {u_{j}}/{U}$ uniformly for all $j$ and the map
  $\zeta \mapsto \zeta^{2^*-1}$ is continuous.

  The existence of $\partial_{\a}T$, $\partial_{u}T$
  and $\partial_{\a u}T$ (for the
  topology of $X$) and their continuity follows in a very similar way
  and we omit~it.
\end{proof}

Next we show a compactness result for the operator
$w \mapsto (-\Delta)^{-1}\Bigl( \frac w{(1+|x|^2)^2} \Bigr)$.  
We need some
decay estimates on solutions of a semilinear elliptic equation.

\begin{lemma}\label{lemma-ST}
  If $0 < p < N$ and $h$ is a nonnegative, radial function
  belonging to $L^1(\R^N)$, then
  \begin{equation*}
    \int_{\R^N} \frac{h(y)}{|x-y|^{p}} \intd y
    = O\biggl( \frac{1}{\abs{x}^{p}} \biggr)
    \quad \text{as } |x| \to +\infty.
  \end{equation*}
\end{lemma}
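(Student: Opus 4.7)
The plan is to split $\R^N$ into three regions based on the size of $\abs{y}$ compared to $R:=\abs{x}$, namely the inner ball $\{\abs{y}\le R/2\}$, the outer region $\{\abs{y}\ge 2R\}$, and the middle annulus $\{R/2<\abs{y}<2R\}$. The first two are handled by simple triangle-inequality bounds; the middle region is the heart of the proof and is where the hypothesis that $h$ is radial becomes crucial.

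For $\abs{y}\le R/2$ the reverse triangle inequality gives $\abs{x-y}\ge R/2$, so $\abs{x-y}^{-p}\le 2^p R^{-p}$, and the corresponding piece of the integral is bounded by $2^p \norm{h}_{L^1}/R^p$. For $\abs{y}\ge 2R$, likewise $\abs{x-y}\ge \abs{y}/2\ge R$ yields $\abs{x-y}^{-p}\le R^{-p}$ and hence a bound $\norm{h}_{L^1}/R^p$. Both of these estimates use nothing beyond $h\in L^1$.

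For the middle annulus, the integrand has a genuine singularity at $y=x$, so crude pointwise bounds fail. Here I exploit the radial symmetry of $h$: after rotating so that $x=R e_1$ and writing $y=r\omega$ in spherical coordinates, this piece of the integral becomes
\begin{equation*}
    \omega_{N-1}\int_{R/2}^{2R} h(r)\, r^{N-1} \int_0^\pi \bigl(R^2+r^2-2Rr\cos\theta\bigr)^{-p/2} \sin^{N-2}\theta\intd\theta \intd r.
\end{equation*}
Using the lower bound $R^2+r^2-2Rr\cos\theta \ge C\,Rr\,\theta^2$ (valid for $r$ comparable to $R$) together with $\sin^{N-2}\theta\le\theta^{N-2}$, the inner angular integral is controlled by $C(Rr)^{-p/2}\int_0^\pi \theta^{N-2-p}\intd\theta \le C' R^{-p}$. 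Combined with $\int_{R/2}^{2R} h(r) r^{N-1}\intd r \le \omega_N^{-1}\norm{h}_{L^1}$, the middle contribution is also $O(\norm{h}_{L^1}/R^p)$; summing the three gives the claim.

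The main obstacle is precisely the middle annulus: the singularity of $\abs{x-y}^{-p}$ near $y=x$ cannot be absorbed purely by the $L^1$ mass of $h$, and one must use radial symmetry to reduce to a one-dimensional integral in $r$ against an explicit kernel coming from the spherical average of $\abs{x-r\omega|^{-p}$, whose size is then controlled by an elementary angular estimate.
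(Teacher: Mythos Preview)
The paper does not actually prove this lemma; it merely cites~\cite{ST}, so there is no argument in the paper to compare against. Your three-region decomposition is the natural one, and your treatment of the inner ball and the outer region is fine. The gap is in the middle annulus, specifically in the claimed bound on the angular integral: you estimate $\int_0^\pi (R^2+r^2-2Rr\cos\theta)^{-p/2}\sin^{N-2}\theta\,\intd\theta$ by $C(Rr)^{-p/2}\int_0^\pi \theta^{N-2-p}\,\intd\theta$, but this last integral diverges whenever $p \ge N-1$. Indeed, for $p \ge N-1$ the spherical average $\int_{\S^{N-1}} \abs{Re_1 - r\omega}^{-p}\intd\omega$ is genuinely infinite at $r = R$, so no uniform-in-$r$ bound of the shape $C'R^{-p}$ is available, and your argument only establishes the lemma for $0 < p < N-1$.

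This is not merely a defect of your approach: the lemma as stated appears to be false for $p \in (N-1, N)$. For instance, with $N=3$ and $p=5/2$, take $h = \sum_{k\ge 1} \mathbf{1}_{[2^k - 2^{-3k},\, 2^k + 2^{-3k}]}(\abs{y})$; then $h \in L^1(\R^3)$, but at $\abs{x} = 2^k$ the ball $B(x, 2^{-3k})$ lies inside the $k$-th shell, giving $\int h(y)\abs{x-y}^{-5/2}\intd y \ge \int_{B(0,2^{-3k})}\abs{z}^{-5/2}\intd z = c\,2^{-3k/2}$, which is not $O(2^{-5k/2})$. Fortunately, the only place the paper uses this lemma (the proof of Lemma~\ref{lemma-comp}) has $p = (N-2)\tfrac{q}{q-1}$ with $q$ large, hence $p$ close to $N-2 < N-1$, and in that range your proof is entirely correct.
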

The general statement of this Lemma which also applies in a nonradial
setting can be found in \cite{ST}. Here we report only the radial
version.

Now we can prove our compactness result:

\begin{lemma}\label{lemma-comp}
  The operator
  \begin{equation}
    M(w) := (-\Delta)^{-1}\left( \frac{w}{(1+|x|^2)^2} \right)
  \end{equation}
  is compact from $X$ to $X$.
\end{lemma}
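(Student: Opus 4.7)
The plan is to show that for every bounded sequence $\{w_n\}$ in $X$, the image sequence $\{M(w_n)\}$ has a subsequence that converges in $X$. Suppose $\|w_n\|_X\le C_0$, which encodes both $\|w_n\|_{1,2}\le C_0$ and $|w_n(x)|\le C_0\,U(x)$ pointwise. Using Banach--Alaoglu and the compact embedding $D^{1,2}_{\rad}(\R^N)\hookrightarrow L^p_{\mathrm{loc}}(\R^N)$ for $1\le p<2^*$, together with a diagonal extraction, I would pass to a subsequence (still denoted $\{w_n\}$) with $w_n\rightharpoonup w^*$ weakly in $D^{1,2}(\R^N)$ and $w_n\to w^*$ almost everywhere; the pointwise bound $|w^*|\le C_0 U$ is preserved, so $w^*\in X$. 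Applying dominated convergence in the Riesz representation of $M$ (the integrand is bounded by $C_0U(y)/[(1+|y|^2)^2 |x-y|^{N-2}]$, which is $L^1$ in $y$ for each fixed $x$) then yields the pointwise limit $M(w_n)(x)\to M(w^*)(x)$.

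For convergence in the $D^{1,2}$ norm I would test the equation $-\Delta M(w_n-w^*)=(w_n-w^*)/(1+|x|^2)^2$ against $M(w_n-w^*)$ itself to obtain
\begin{equation*}
  \|M(w_n-w^*)\|_{1,2}^2
  =\int_{\R^N}\frac{(w_n-w^*)\,M(w_n-w^*)}{(1+|x|^2)^2}\,\intd x.
\end{equation*}
A direct computation using $-\Delta U=U^{2^*-1}$ and the algebraic identity $U(x)/(1+|x|^2)^2=U^{2^*-1}(x)/[N(N-2)]$ yields $M(U)=U/[N(N-2)]$; consequently, the operator norm of $M\colon D\to D$ is at most $1/[N(N-2)]$, and in particular $|M(w_n-w^*)|\le C\,U$ pointwise. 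The integrand above is thus dominated by the fixed $L^1$ function $C\,U^2/(1+|x|^2)^2$, and a.e.\ pointwise convergence of both factors to $0$ combined with dominated convergence closes this step.

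The delicate part is the convergence $\|M(w_n-w^*)\|_D\to 0$, i.e.\ $\sup_x |M(g_n)(x)|/U(x)\to 0$ with $g_n:=w_n-w^*$. Fix $\e>0$ and split the $x$-space into $|x|\le R$ and $|x|>R$. On $B_R$, the source term $w_n/(1+|x|^2)^2$ is uniformly bounded in $L^\infty(B_{R+1})$, so interior elliptic regularity gives a uniform $C^{1,\alpha}(B_R)$ bound on $\{M(w_n)\}$; combined with the pointwise convergence, Arzelà--Ascoli provides uniform convergence on $B_R$, and dividing by $\inf_{B_R}U>0$ yields convergence of the $D$-ratio there. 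For $|x|>R$, I would split the $y$-integral defining $M(g_n)(x)$ into the regions $|y|<|x|/2$, $|x|/2\le|y|\le 2|x|$, and $|y|>2|x|$. In the last two regions, using $|g_n|\le 2C_0 U$ and elementary bounds on the Riesz kernel (supplemented by Lemma~\ref{lemma-ST} in the tail), the contribution to $|M(g_n)(x)|/U(x)$ is of order $C/|x|^2\le C/R^2$, \emph{uniformly in $n$}. In the remaining region, $|x-y|\ge|x|/2$ gives
\begin{equation*}
  \int_{|y|<|x|/2}\frac{|g_n(y)|}{(1+|y|^2)^2|x-y|^{N-2}}\,\intd y
  \le \frac{2^{N-2}}{|x|^{N-2}}\int_{\R^N}\frac{|g_n(y)|}{(1+|y|^2)^2}\,\intd y,
\end{equation*}
and the prefactor $|x|^{-(N-2)}$ is absorbed by $1/U(x)$, leaving a constant multiple of $\int|g_n|/(1+|y|^2)^2\,\intd y$. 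Since $|g_n|/(1+|y|^2)^2\le 2C_0\,U/(1+|y|^2)^2\in L^1(\R^N)$ and $g_n\to 0$ a.e., this integral tends to $0$ by dominated convergence.

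The main obstacle is precisely this $D$-norm control on the outer part $|x|>R$, since the weight $1/U$ grows at infinity and the convergence has to be uniform in $x$. The resolution combines two mechanisms: on compact sets, a.e.\ convergence is upgraded to uniform convergence via elliptic regularity and Arzelà--Ascoli; in the exterior, the genuine convergence of $g_n$ is used only in the small-$y$ region (where it follows from dominated convergence against the fixed $L^1$ majorant $CU/(1+|y|^2)^2$), while in the complementary region where $|y|$ is comparable to or larger than $|x|$, the pointwise domination $|g_n|\le CU$ together with the decay of $U$ already yields an $o_R(1)$ term uniformly in~$n$.
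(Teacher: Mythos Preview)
Your argument is correct, but the route you take for the $D$-norm convergence is genuinely different from the paper's. The paper avoids the spatial splitting and the appeal to interior regularity/Arzel\`a--Ascoli entirely: it writes, for $g_n := w_n - w$,
\[
  |M(g_n)(x)|
  \le C\!\left(\int_{\R^N}
    \biggl|\frac{U^{\frac{N+2}{N-2}-\e}(y)}{|x-y|^{N-2}}\biggr|^{\frac{q}{q-1}}
    \intd y\right)^{\!\frac{q-1}{q}}
    \left(\int_{\R^N}
    \biggl|\frac{g_n(y)}{U^{1-\e}(y)}\biggr|^{q}\intd y\right)^{\!\frac1q},
\]
with $\e q = 2^*$. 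The second factor tends to $0$ by dominated convergence (the integrand is $\le \|g_n\|_D^q\,U^{\e q}\in L^1$), while Lemma~\ref{lemma-ST} shows the first factor is $\le C\,U(x)^{q/(q-1)}$ uniformly in~$x$, giving $\|M(g_n)\|_D\to 0$ in one stroke. This is shorter and uses only soft integral estimates; in exchange it hinges on the Riesz-potential decay lemma. Your decomposition is more hands-on: you trade that lemma for elliptic $W^{2,p}$/Schauder estimates on balls plus an explicit dyadic treatment of the tail, which makes the mechanism (local compactness on compacta, uniform smallness at infinity from the pointwise bound $|g_n|\le CU$) more transparent. Both approaches also differ slightly for the $D^{1,2}$ step: the paper simply notes that $g_n/(1+|x|^2)^2\to 0$ in $L^{2N/(N+2)}$ by dominated convergence and invokes the continuity of $(-\Delta)^{-1}$, whereas you test against $M(g_n)$; either works.
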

\begin{proof}
  First of all, let us show that $M$ is well defined.
  If $w\in X$ then $|w|\le \norm{w}_D \, U$ so that
  $\frac{|w|}{(1+|x|^2)^2} \le C \norm{w}_D \, U^{\frac{N+2}{N-2}}
  \in L^\frac{2N}{N+2}(\R^N)$
  and, using the fact that
  $(-\Delta)^{-1} : L^{\frac{2N}{N+2}} \to D^{1,2}$, one gets $M(w)
  \in D^{1,2}$.  Moreover
  \begin{equation}\label{eq:M-bound}
    \abs{M(w)}
    \le C
    \int_{\R^N} \frac1{|x-y|^{N-2}} \frac{\abs{w(y)}}{(1+|y|^2)^2}
    \intd y
    \le C \norm{w}_D \int_{\R^N}\frac{U^{\frac{N+2}{N-2}}(y)}{|x-y|^{N-2}}
    = C \norm{w}_D \, U(x),
  \end{equation}
  and so $M(w) \in D$.  This argument incidentally shows that
  $M : X \to X$ is continuous.

  Now let
  $(w_n)$ be a bounded sequence in $X$ and let us prove that, up to a
  subsequence,
  $g_n := M(w_n)$ converges
  strongly to some $g\in X$.  On one hand, since $(w_n)$ is bounded in
  $D^{1,2}$, going if necessary to a subsequence, one can assume that
  $(w_n)$ converges weakly to some $w$ in $D^{1,2}$ and $w_n \to w$
  almost everywhere.  On the other hand, $( \norm{w_n}_D )$ is also
  bounded which means that $\abs{w_n} \le C U$ where $C$ is
  independent of $n$ and so
  $\frac{|w_n|}{(1 + \abs{x}^2)^2} \le C U^{\frac{N+2}{N-2}}$.
  Lebesgue's dominated convergence theorem then implies that
  $\frac{w_n}{(1 + \abs{x}^2)^2}$ converges strongly to
  $\frac{w}{(1 + \abs{x}^2)^2}$ in $L^{\frac{2N}{N+2}}$.  From the
  continuity of $(-\Delta)^{-1} : L^{\frac{2N}{N+2}} \to D^{1,2}$, one
  concludes that $g_n \to g$ in $D^{1,2}$.  Moreover, passing to the
  limit on $\abs{w_n} \le C U$ yields $w \in D$, and
  passing to the limit  on the inequality \eqref{eq:M-bound}
  for $w = w_n$ yields $g \in D$.

  It remains to show that $\norm{g_n - g}_D \to 0$.
  This is somewhat similar to the argument used in Lemma~\ref{lemma1}.
  First, H\"older inequality allows us to get the estimate:
  \begin{align}
    |g_n(x)-g(x)|
    &\le C\int_{\R^N} \frac{1}{|x-y|^{N-2}}\frac{|w_n(y)-w(y)|}{(1+|y|^2)^2}
    \nonumber\\
    &= C\int_{\R^N} \frac{U^{\frac{N+2}{N-2}-\e}(y)}{|x-y|^{N-2 }}
      \frac{|w_n(y)-w(y)|}{U^{1-\e}(y)}
    \nonumber\\
    \displaybreak[2]
    &\le C \left(\mkern 5mu \int_{\R^N}
      \left|\frac{U^{\frac{N+2}{N-2}-\e}(y)}{|x-y|^{N-2}}\right|^{\frac{q}{q-1}}
      \right)^{\frac{q-1}q}
      \cdot\left(\mkern 5mu
      \int_{\R^N}\left|\frac{|w_n(y)-w(y)|}{U^{1-\e}(y)}
      \right|^q\right)^{\frac 1q}
      \label{3.10}
  \end{align}
  where $\e > 0$ will be chosen small and $q$ large and satisfying
  $\e q=\frac{2N}{N-2}$.  Because of this latter constraint, the
  integrand of the right integral is bounded by
  $\norm{w_n - w}_D^q\, U^{\e q}(y) \le C U^{\e q}(y) \in L^1$ where $C$
  is independent of~$n$.  Lebesgue's dominated convergence theorem
  then implies that this integral converges to $0$ as $n \to \infty$.

  The proof will be complete if we show:
  \begin{equation}\label{cl1}
    \begin{split}
      \int_{\R^N}
      \left|\frac {U^{\frac{N+2}{N-2}-\e}(y)}{|x-y|^{N-2 }}
      \right|^{\frac{q}{q-1}} \intd y
      \le \frac{C}{(1+|x|)^{(N-2)\frac q{q-1}}}
      = C U^{\frac{q}{q-1}}.
    \end{split}
  \end{equation}
  This inequality follows from Lemma \ref{lemma-ST} because
  $h(y) = U^{\left(\frac{N+2}{N-2}-\e\right) \frac{q}{q-1}} \in L^1$
  if and only if $\bigl( \frac{N+2}{N-2}-\e \bigr) \frac{q}{q-1}
  > \frac{N}{N-2}$, which is possible if $\e$ is small enough and
  $q$ is large enough.
\end{proof}

\subsection{Application of the Crandall-Rabinowitz Theorem}

In this section we will verify the assumptions of the of the
Crandall-Rabinowitz Theorem.  Let us recall that by Corollary
\ref{corb}, the linearized system \eqref{linearization-2} has the
following radial solutions
\begin{itemize}
\item[i)] $(1,\dotsc,1)W$ (due to the
  dilation invariance of the problem), for every $\a$,
\item[ii)] $\eta:=e_n W_n(|x|)$
  where $e_n \ne 0$ satisfies $A(\bar \a)e_n = \lambda_n e_n$
  (see \eqref{eq:def-Wn})
  for $\bar \a$ satisfying \eqref{cond}.
\end{itemize}
Notice that $(1,\dots,1) \perp e_n$ and so $(1,\dots,1)W \perp \eta$
in $(D^{1,2})^k$, i.e., $\eta \in K$.

To apply Rabinowitz' result, we need to verify the assumptions of
Theorem~1.7 in~\cite{CRJFA}.  This is the purpose of the following
lemmas.

\begin{lemma}\label{ker-dzT}
  Let $T$ be as defined in \eqref{T} and assume that $\bar \a$
  satisfies \eqref{eq:ibar}--\eqref{y8}.
  Then $\ker\bigl(\partial_u T(\bar \a, U,\dots,U)\bigr)$ is
  one dimensional and it is given by
  \begin{equation}\label{et1}
    \ker\bigl(\partial_u T(\bar \a, U,\dots,U)\bigr)
    = \operatorname{span} \{\eta\}
    \qquad\text{where }
    \eta = e_n W_n ,
  \end{equation}
  $W_n$ is defined in \eqref{eq:def-Wn}, $e_n \ne 0$, and
  $A(\bar\a)e_n = \lambda_n e_n$.
\end{lemma}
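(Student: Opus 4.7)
The plan is to compute $\partial_u T(\bar\alpha, U,\dots,U)$ explicitly, reformulate the kernel equation as a PDE system coupled through $A(\bar\alpha)$, decouple it by diagonalizing $A(\bar\alpha)$, and then analyze each scalar equation via Corollary~\ref{corb} together with a Fredholm-type orthogonality argument. Since $(2^*-1)U^{2^*-2}(x) = N(N+2)/(1+|x|^2)^2$, differentiating \eqref{T} in $u$ at $(\bar\alpha, U, \dots, U)$ yields, for every $v = (v_1, \dots, v_k) \in K$,
\begin{equation*}
    \partial_u T(\bar\alpha, U,\dots,U)(v)_i
    = P_K\Biggl( v_i - N(N+2)\, M\Bigl(\sum_{j=1}^k a_{ij}(\bar\alpha)\, v_j\Bigr) \Biggr),
\end{equation*}
with $M$ as in Lemma~\ref{lemma-comp}. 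Since the orthogonal complement of $K$ inside $\bigl(D^{1,2}_\rad(\R^N)\bigr)^k$ is the one-dimensional line $\R\,(1,\dots,1)W$, the kernel condition $\partial_u T(\bar\alpha,U,\dots,U)(v) = 0$ is equivalent to $v \in K$ together with the existence of a Lagrange multiplier $L \in \R$ such that, applying $-\Delta$ and using $-\Delta W = \frac{N(N+2)}{(1+|x|^2)^2} W$,
\begin{equation*}
    -\Delta v_i - \frac{N(N+2)}{(1+|x|^2)^2} \sum_{j=1}^k a_{ij}(\bar\alpha)\, v_j
    = L\, \frac{N(N+2)}{(1+|x|^2)^2}\, W, \qquad i = 1,\dotsc,k.
\end{equation*}

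To decouple the system, I would take the orthogonal matrix $B$ whose first column is $e_1 := (1,\dots,1)/\sqrt{k}$ and whose remaining columns form an orthonormal basis of eigenvectors of $A(\bar\alpha)$, exactly as in the proof of Proposition~\ref{lin-k}. With $w := B^T v$ and $B^T(1,\dots,1)^T = \sqrt{k}\,(1,0,\dots,0)^T$, the system becomes
\begin{equation*}
    -\Delta w_1 - \frac{N(N+2)}{(1+|x|^2)^2} w_1
    = \sqrt{k}\, L\, \frac{N(N+2)}{(1+|x|^2)^2}\, W
\end{equation*}
and, for $i = 2,\dots,k$,
\begin{equation*}
    -\Delta w_i - \Lambda_i\, \frac{N(N+2)}{(1+|x|^2)^2}\, w_i = 0,
\end{equation*}
all posed in $D^{1,2}_\rad(\R^N)$. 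For $i \notin \{1,\ibar\}$, assumption \eqref{y8} and Corollary~\ref{corb} force $w_i = 0$, while for $i = \ibar$ the same corollary gives $w_{\ibar} = c\, W_n$ for some $c \in \R$.

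For $i = 1$, multiplying the equation by $W$, integrating over $\R^N$ (justified by $D^{1,2}_\rad$ membership together with the pointwise decay of $W$), and using that $W$ lies in the kernel of the homogeneous operator yields
\begin{equation*}
    \sqrt{k}\, L\, N(N+2) \int_{\R^N} \frac{W^2}{(1+|x|^2)^2} \intd x = 0,
\end{equation*}
so that $L = 0$; consequently $w_1 = a\,W$ for some $a \in \R$. The constraint $v \in K$, upon the computation $\sum_i b_{ij} = \sqrt{k}\,\delta_{1j}$, boils down to $a \int_{\R^N} |\nabla W|^2 \intd x = 0$, forcing $a = 0$. Reverting to the variable $v = Bw$, we obtain $v = c\, e_n\, W_n$ where $e_n$ is the $\ibar$-th column of $B$, which is an eigenvector of $A(\bar\alpha)$ for $\lambda_n$, thereby proving~\eqref{et1}. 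The main subtlety lies in the correct identification of the orthogonal complement of $K$ (to pin down the form of the Lagrange multiplier) and in the Fredholm step; both rely on standard properties of $D^{1,2}_\rad$ together with the pointwise control supplied by the $X$-topology.
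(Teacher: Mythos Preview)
Your proof is correct and follows essentially the same approach as the paper's: identify the kernel equation as the linearized system~\eqref{linearization-2} with a Lagrange multiplier along $(1,\dots,1)W$, eliminate the multiplier by testing against~$W$, and then use Corollary~\ref{corb} together with the constraint $v\in K$ to pin down the remaining direction as $e_nW_n$. The only cosmetic difference is the order of operations: you diagonalize $A(\bar\alpha)$ first and read off $L=0$ from the decoupled $w_1$-equation, whereas the paper sums the original equations over~$i$ (using $\sum_i a_{ij}=1$) to get $L=0$ before invoking Corollary~\ref{corb}.
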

\begin{proof}
  Let us consider the Fr\'echet derivative of $T$ at
  $(\a, U,\dots,U)$. We
  have that
  \begin{equation}
    \label{eq:dzT}
    \partial_u T(\a, U,\dots,U)
    \begin{pmatrix} w_1\\ \vdots\\w_k \end{pmatrix}
    =
    P_K
    \begin{pmatrix}
      \displaystyle
      w_i - (-\Delta)^{-1} \biggl(
      \sum_{j=1}^k a_{ij}(\a) \,
      \frac{N(N+2)}{(1+|x|^2)^2} \, w_j \biggr)
    \end{pmatrix}_{i=1}^k
  \end{equation}
  so that
  $\partial_u T(\bar \a,U,\dots,U) \left(\begin{smallmatrix} w_1\\
      \smallvdots \\ w_k\end{smallmatrix}\right)
  = \left(\begin{smallmatrix} 0\\
      \smallvdots\\
      0 \end{smallmatrix}\right)$ if and only if
  $(w_1,\dots,w_k)\in K$ is a solution~to
  \begin{equation}
    \label{3.12-b}
    \forall i=1,\dots,k,\qquad
    -\Delta w_i - 
    \frac{N(N+2)}{(1+|x|^2)^2} \sum_{j=1}^k a_{ij}(\bar\a) \, w_j
    = - L \Delta W
    \quad \text{in }\R^N,
  \end{equation}
  for some $L=L(w)\in \R$.  Multiplying by $W$, integrating, and
  summing up yields
  \begin{equation*}
    \sum_{i=1}^k \left( \mkern 5mu
      \int_{\R^N}\nabla w_i\cdot \nabla W\intd x
      - \int_{\R^N} \frac{N(N+2)} {(1+|x|^2)^2}
      \sum_{j=1}^k a_{ij}(\bar\a)  w_j(x) W(x)\intd x
    \right)
    = k L \int_{\R^N} \abs{\nabla W}^2 \intd x.
  \end{equation*}
  Recalling that $-\Delta W = \frac{N(N+2)}{(1+|x|^2)^2} W$ (see
  Corollary~\ref{corb}) and $\sum_i a_{ij} = 1$,
  one sees that the left hand side of the
  equation vanishes and so $L = 0$.  Thus, $w = (w_1,\dots, w_k)$
  is a solution to~\eqref{linearization-2} and, using again
  Corollary~\ref{corb} and assumptions \eqref{eq:ibar}--\eqref{y8},
  this is the case if and only if
  \begin{equation*}
    w \in \operatorname{span} \bigl\{ (1,\dots,1) W,\ \eta \bigr\}.
  \end{equation*}
  Recalling that $w \in K$, which means that $w$ is orthogonal to
  $(1,\dots,1) W$, and that $\eta \perp (1,\dots,1)W$, one concludes
  that $w$ is a multiple of~$\eta$.
\end{proof}

\begin{lemma}\label{ran-dzT}
  Under the assumptions of Lemma \ref{ker-dzT} the range
  $\Ran\bigl(\partial_u T(\bar \a, U,\dots,U)\bigr) \subseteq K$ has
  codimension one. It is the set of
  functions
  $f = (f_1,\dots,f_k)\in K$ that are orthogonal to $\eta$ in
  $\bigl(D^{1,2}(\R^N)\bigr)^k$, that is
  \begin{equation}\label{eq:ran-orthog}
    (f | \eta) :=
    \sum_{i=1}^k e_{n,i}
    \int_{\R^N} \nabla f_i \cdot \nabla W_n \intd x = 0
  \end{equation}
  where $e_n = (e_{n,i})_{i=1}^k$.
  Hence a complement of $\Ran\bigl(\partial_u T(\bar\a, U,\dots,U)\bigr)$ in
  $K$ is spanned by the vector $\eta$ defined in
  Lemma~\ref{ker-dzT}.
\end{lemma}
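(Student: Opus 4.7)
The plan is to view $\partial_u T(\bar\a, U,\dotsc,U)$ as an operator from $K$ into $K$, recognize it as a compact perturbation of the identity, and then apply the Fredholm alternative together with Lemma~\ref{ker-dzT} to deduce $\codim \Ran = \dim \ker = 1$. A self-adjointness computation will then show that $\eta$ is $\bigl(D^{1,2}(\R^N)\bigr)^k$-orthogonal to the range, giving the explicit identification~\eqref{eq:ran-orthog} and at the same time exhibiting $\eta$ as spanning a complement.

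Let $(M_A w)_i := \sum_{j=1}^k a_{ij}(\bar\a)\,M(w_j)$, where $M$ is the compact operator of Lemma~\ref{lemma-comp}. Formula~\eqref{eq:dzT} at $(\bar\a, U,\dotsc,U)$ then reads $\partial_u T(\bar\a, U,\dotsc,U)w = P_K\bigl((I - M_A)w\bigr)$, and on $K$ (where $P_K$ acts as the identity) this restricts to $I_K - P_K M_A$. Compactness of $M$ on $X$ yields compactness of $M_A$ on $X^k$, and composing with the continuous projector $P_K$ of Lemma~\ref{cont-PK} makes $P_K M_A\colon K \to K$ compact. Hence $I_K - P_K M_A$ is a compact perturbation of the identity on the Banach space $K$, so by Riesz--Schauder its range is closed and $\codim \Ran = \dim \ker$, which equals $1$ by Lemma~\ref{ker-dzT}.

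It remains to verify that $\eta$ is $(D^{1,2})^k$-orthogonal to every element in the range. For $w \in K$, since $P_K$ is orthogonal for $(\cdot\,|\,\cdot)_{D^{1,2}}$ and $\eta \in K$,
\begin{equation*}
\bigl(\partial_u T(\bar\a, U,\dotsc,U) w \,\bigm|\, \eta\bigr)_{D^{1,2}}
= \bigl((I - M_A)w \,\bigm|\, \eta\bigr)_{D^{1,2}}.
\end{equation*}
Using the weak formulation of $(-\Delta)^{-1}$, together with the symmetry of $A(\bar\a)$ (assumption~\eqref{y1a}), a direct calculation shows that $M_A$ is self-adjoint for $(\cdot\,|\,\cdot)_{D^{1,2}}$, so the above equals $\bigl(w \,\bigm|\, (I - M_A)\eta\bigr)_{D^{1,2}}$. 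Writing $\eta = e_n W_n$ with $A(\bar\a) e_n = \lambda_n e_n$ and recalling from Corollary~\ref{corb} that $-\Delta W_n = \lambda_n \frac{N(N+2)}{(1+|x|^2)^2} W_n$, we get $M_A \eta = \frac{1}{\lambda_n}\,A(\bar\a) e_n \, W_n = e_n W_n = \eta$, i.e.\ $(I - M_A)\eta = 0$. Therefore $\Ran \partial_u T(\bar\a, U,\dotsc,U) \subseteq \{f \in K : (f|\eta)_{D^{1,2}} = 0\}$; the two subspaces both have codimension one in $K$, so they coincide, and since $\eta$ is not $(D^{1,2})^k$-orthogonal to itself it spans a complement of the range. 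The only genuinely delicate prerequisite, already secured, is the compactness of $M$ on the non-Hilbertian space $X$ in Lemma~\ref{lemma-comp}; beyond that the argument is a standard application of the Fredholm alternative combined with the explicit eigenfunction identity $M W_n = \lambda_n^{-1} W_n$.
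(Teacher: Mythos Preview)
Your proof is correct and follows essentially the same route as the paper: write $\partial_u T(\bar\a,U,\dots,U)$ on $K$ as the identity minus a compact operator (via Lemma~\ref{lemma-comp} and Lemma~\ref{cont-PK}) and invoke the Fredholm alternative together with Lemma~\ref{ker-dzT}. The paper's proof stops there and simply asserts that~\eqref{eq:ran-orthog} ``follows from the Fredholm Alternative''; your self-adjointness computation $(M_A w\,|\,\eta)_{D^{1,2}} = (w\,|\,M_A\eta)_{D^{1,2}}$ together with $M_A\eta=\eta$ actually supplies the missing identification of the annihilating functional, which is a genuine (if small) improvement since $K$ carries the $X$-topology rather than a Hilbert structure. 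One cosmetic slip: with $M$ exactly as in Lemma~\ref{lemma-comp} your $M_A$ is off by the constant $N(N+2)$ from~\eqref{eq:dzT}, so either absorb that factor into the definition of $M_A$ or note that $M W_n = \bigl(\lambda_n N(N+2)\bigr)^{-1} W_n$; the argument is unaffected.
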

\begin{proof}
  This is a consequence of Lemma \ref{lemma-comp}.  Indeed the
  operator $\partial_uT(\bar\a, U,\dots,U)$ can be written
  \begin{math}
    \partial_u T(\bar\a, U,\dots,U)[w]
    = w - P_K \bigl( (-\Delta)^{-1} \sum_{j=1}^k a_{ij}(\bar\a)
    \frac{N(N+2)}{(1+|x|^2)^2} \, w_j \bigr)_{i=1}^k
  \end{math}
  because $w \in K$,
  and so is a compact perturbation of the
  identity. Thus \eqref{eq:ran-orthog} follows from the Fredholm Alternative.
\end{proof}

\begin{lemma}\label{transversality-dT}
  Under the assumptions of Lem\-ma~\ref{ker-dzT} and~\eqref{y9}, the
  operator $T$ satisfies
  \begin{equation}
    \label{T''}
    \partial_{\a u}T(\bar \a, U,\dots,U) [\eta]
    \notin \Ran\bigl(\partial_u T(\bar \a,U,\dots,U)\bigr)
  \end{equation}
  where $\eta$ is as defined in \eqref{et1}.
\end{lemma}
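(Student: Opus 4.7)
My plan is to reduce the non-inclusion $\partial_{\a u}T(\bar\a, U,\dots,U)[\eta] \notin \Ran\bigl(\partial_u T(\bar\a,U,\dots,U)\bigr)$ to a single non-vanishing scalar product. By Lemma~\ref{ran-dzT} the range is exactly the $(D^{1,2})^k$-orthogonal complement of $\eta$ inside $K$, so it suffices to establish
\begin{equation*}
  \bigl(\partial_{\a u} T(\bar\a, U, \dots, U)[\eta] \bigm| \eta\bigr) \ne 0.
\end{equation*}

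I would start by differentiating formula~\eqref{eq:dzT} with respect to $\a$: the $w_i$ term is independent of $\a$, so
\begin{equation*}
  \partial_{\a u} T(\bar\a, U,\dots,U)[w]
  = -\, P_K \Biggl( (-\Delta)^{-1}\biggl(\frac{N(N+2)}{(1+|x|^2)^2}
      \sum_{j=1}^k \dd{a_{ij}}{\a}(\bar\a)\, w_j\biggr) \Biggr)_{i=1}^k .
\end{equation*}
Evaluating at $w = \eta = e_n W_n$, the $j$-sum in the $i$-th slot becomes the $i$-th component of $\dd{A}{\a}(\bar\a)\, e_n$. Because $\eta \in K$ and $P_K$ is an orthogonal projection in $(D^{1,2})^k$, the factor $P_K$ disappears from the pairing against $\eta$. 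Using the defining duality $\int_{\R^N} \nabla\bigl((-\Delta)^{-1}g\bigr)\cdot \nabla h \intd x = \int_{\R^N} g h \intd x$, the pairing collapses to
\begin{equation*}
  \bigl(\partial_{\a u} T(\bar\a, U, \dots, U)[\eta] \bigm| \eta\bigr)
  = -\, \biggl(e_n \cdot \dd{A}{\a}(\bar\a)\, e_n\biggr)
  \int_{\R^N} \frac{N(N+2)}{(1+|x|^2)^2}\, W_n^2(|x|) \intd x .
\end{equation*}

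The integral is a strictly positive, convergent quantity because $W_n(|x|) = O\bigl(|x|^{-(N-2)}\bigr)$ at infinity by \eqref{eq:def-Wn}. For the scalar prefactor I invoke \eqref{eq:dA}: differentiating the identity $A(\a)\, e(\a) = \L_{\ibar}(\a)\, e(\a)$ at $\bar\a$ and pairing with $e_n$, the symmetry of $A(\bar\a)$ makes the contribution of $e'(\bar\a)$ cancel and yields
\begin{equation*}
  e_n \cdot \dd{A}{\a}(\bar\a)\, e_n
  = \dd{\L_{\ibar}}{\a}(\bar\a)\, \norm{e_n}^2 ,
\end{equation*}
which is nonzero by hypothesis~\eqref{y9}. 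Combining the two non-vanishings gives the result.

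I do not expect any serious obstacle: the only point that requires care is the justification of the duality identity for $g = \frac{N(N+2)}{(1+|x|^2)^2} W_n$, but since this $g$ lies in $L^{\frac{2N}{N+2}}(\R^N)$ and $W_n \in D^{1,2}_\rad(\R^N)$, the identity is simply the definition of $(-\Delta)^{-1}$ as the inverse of $-\Delta \colon D^{1,2}(\R^N) \to D^{1,2}(\R^N)^*$. All other manipulations are linear-algebraic and make direct use of the ingredients already recorded in Lemmas~\ref{ker-dzT} and~\ref{ran-dzT} and assumption~\eqref{y9}.
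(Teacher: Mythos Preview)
Your proof is correct and follows essentially the same route as the paper's: compute $\partial_{\a u}T(\bar\a,U,\dots,U)[\eta]$ by differentiating~\eqref{eq:dzT}, drop $P_K$ in the pairing against $\eta\in K$, reduce to the product of $e_n\cdot\dd{A}{\a}(\bar\a)e_n$ with $\int_{\R^N}\frac{N(N+2)}{(1+|x|^2)^2}W_n^2$, and conclude via~\eqref{eq:dA} and~\eqref{y9}. The only cosmetic difference is that you re-derive~\eqref{eq:dA} (keeping the factor $\norm{e_n}^2$), whereas the paper simply quotes it for a unit eigenvector.
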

\begin{proof}
  The derivative $\partial_u T(\a, U,\dots,U)$ is given by~\eqref{eq:dzT}.
  Differentiating with respect to $\a$ yields
  \begin{equation*}
    \partial_{\a u}T(\bar \a, U,\dots,U) [\eta]
    = P_K g
  \end{equation*}
  where $g = (g_1,\dots, g_k)$ and
  \begin{equation*}
    g_i :=
    - (-\Delta)^{-1} \biggl(
    \sum_{j=1}^k  \partial_\a a_{ij}(\bar\a) \,
    \frac{N(N+2)}{(1+|x|^2)^2} \, \eta_j \biggr),
    \qquad i=1,\dots, k.
  \end{equation*}
  In view of Lemma~\ref{ran-dzT}, we have to show that
  $( P_Kg | \eta ) \ne 0$.  Since $\eta \in K$,
  $( P_Kg | \eta ) = (g|\eta)$.  Thus, we have to show that
  \begin{equation*}
    (g|\eta)
    = \sum_{i=1}^k e_{n,i}
    \int_{\R^N} \nabla\biggl(
    - (-\Delta)^{-1} \Bigl(
    \sum_{j=1}^k  \partial_\a a_{ij}(\bar\a) \,
    \frac{N(N+2)}{(1+|x|^2)^2} \, \eta_j
    \Bigr)
    \biggr) \cdot \nabla W_n \intd x \ne 0,
  \end{equation*}
  that is, recalling that $\eta_j = e_{n,j} \, W_n$,
  \begin{equation*}
    e_n \cdot \dd{A}{\a}(\bar\a) e_n
    \int_{\R^N}  \frac{N(N+2)}{(1+|x|^2)^2} W_n^2(x) \intd x \ne 0.
  \end{equation*}
  The proof is complete thanks to assumption~\eqref{y9}
  (see also~\eqref{eq:dA}).
\end{proof}

Now we are in position to apply the bifurcation result of \cite{CRJFA}:

\begin{proposition}\label{t3.2}
  Assume that $A$ satisfies \eqref{y1a} and \eqref{y2}. Assume further
  that there
  exists $\bar \a$ and $\ibar$ such that
  \eqref{eq:ibar}--\eqref{y9} are satisfied.
  Then the point $(\bar \a, U,\dots,U)$ is a radial bifurcation
  point for the curve $\alpha \mapsto (\a,U,\dots ,U)$
  of solutions to \eqref{y1}.
  More precisely, there exists continuous curves
  $\e \mapsto \a_{\e}$ and
  $\e \mapsto (u_{1,\e}, \dotsc, u_{k,\e})$,
  defined for $\e \in \R$ small enough, such that
  $\a_{0}=\bar\a$,
  $u_{i,0} = U$, and
  \begin{equation}\label{c1}
    \begin{cases}
      \displaystyle
      -\Delta u_{i,\e}
      = \sum_{j=1}^k a_{ij}(\a_{\e}) \, u_{j,\e}^{2^*-1}
      + L_\e
      \frac{N(N+2)}{(1 + \abs{x}^2)^2} \, W
      & \text{in }\R^N,\\[5\jot]
      u_{i,\e}>0,\quad u_{i,\e} \in D^{1,2}(\R^N),
    \end{cases}
  \end{equation}
  for some Lagrange multiplier $L_\e$. Moreover,
  for $\e$ small enough,
  \begin{equation}\label{fin}
    (u_{1,\e}, \dotsc, u_{k,\e})
    = (1,\dotsc, 1) \, U + \e e_n W_n\left(|x|\right)
    + \e \phi_{\e}(|x|)
  \end{equation}
  where $e_n$ is an eigenvector of $A$ for the eigenvalue
  $\L_\ibar(\bar\a) = \lambda_n$ and
  $\phi_{\e}$ is an uniformly bounded function in
  $\bigl(D^{1,2}(\R^N)\bigr)^k$ and such that $ \phi_{0}=0$.
\end{proposition}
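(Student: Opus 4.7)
The plan is to verify that the hypotheses of the Crandall--Rabinowitz theorem (Theorem~1.7 in~\cite{CRJFA}) hold for the operator $T$ at the point $(\bar\a,U,\dots,U)$ and then to translate the abstract conclusion into the statement of the proposition.

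First, I would collect the ingredients already established. Lemma~\ref{lemma1} shows that $T : \R \times (K \cap B^k) \to K$ is continuous and that $\partial_\a T$, $\partial_u T$, and $\partial_{\a u} T$ exist and are continuous, which gives the $\C^1$ regularity needed. Lemma~\ref{ker-dzT} identifies the kernel of $\partial_u T(\bar\a,U,\dots,U)$ as the one-dimensional subspace $\spanned\{\eta\}$ with $\eta = e_n W_n$. Lemma~\ref{ran-dzT} shows that $\Ran\bigl(\partial_u T(\bar\a,U,\dots,U)\bigr)$ is closed of codimension one in $K$, with a complement spanned by~$\eta$. Finally, Lemma~\ref{transversality-dT} provides the transversality condition $\partial_{\a u}T(\bar\a,U,\dots,U)[\eta] \notin \Ran\bigl(\partial_u T(\bar\a,U,\dots,U)\bigr)$. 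These are precisely the hypotheses of Crandall--Rabinowitz.

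Applying the theorem yields the existence of $\e_0 > 0$ and a $\C^0$ curve $\e \mapsto (\a_\e, u_\e)$ defined on $(-\e_0,\e_0)$, with $(\a_0,u_0) = (\bar\a, (U,\dots,U))$, such that $T(\a_\e, u_\e) = 0$. Moreover the theorem provides the expansion
\begin{equation*}
  u_\e = (1,\dots,1)\, U + \e\, \eta + \e\, \phi_\e
  = (1,\dots,1)\, U + \e\, e_n W_n(|x|) + \e\, \phi_\e(|x|),
\end{equation*}
where $\phi_\e$ lies in a complement of $\spanned\{\eta\}$ in $K$, with $\phi_0 = 0$, and $\phi_\e \to 0$ as $\e \to 0$ in the $X^k$ topology, hence in particular uniformly bounded in $\bigl(D^{1,2}(\R^N)\bigr)^k$. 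Unwinding the definition of~$T$ via the projector~$P_K$, the identity $T(\a_\e,u_\e) = 0$ means exactly that the bracketed vector in~\eqref{T} lies in $K^\perp = \spanned\{(1,\dots,1)W\}$, which translates into~\eqref{c1} with a Lagrange multiplier $L_\e \in \R$.

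The last thing to check is the positivity $u_{i,\e} > 0$. This follows from the fact that $T$ is defined on the ball $B^k$, where each component satisfies $u_i > U/2 > 0$ pointwise, so positivity is automatic for $\e$ small enough: by the continuity of the curve in~$X$ (whose norm controls the pointwise ratio $u_{i,\e}/U$), we have $\|u_{i,\e} - U\|_X < 1/2$, hence $u_{i,\e}(x) \ge U(x)/2 > 0$ for every $x \in \R^N$. I do not expect any real obstacle here: the delicate work has already been absorbed into the construction of the functional setting (Section~3.1) and the three lemmas above, and what remains is a direct invocation of the abstract bifurcation theorem followed by the reading of the $P_K$-equation as an equation with a Lagrange multiplier. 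The elimination of $L_\e$ — i.e., showing $L_\e = 0$ so that $u_\e$ actually solves~\eqref{y1} — is deferred to the next subsection via the Pohozaev identity and is not part of this proposition.
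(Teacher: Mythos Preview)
Your proposal is correct and follows essentially the same approach as the paper: both invoke Lemmas~\ref{lemma1}, \ref{ker-dzT}, \ref{ran-dzT}, and~\ref{transversality-dT} to verify the hypotheses of Theorem~1.7 in~\cite{CRJFA}, then read off the bifurcating curve and interpret the projected equation $T(\a_\e,u_\e)=0$ as~\eqref{c1} with a Lagrange multiplier. Your write-up is in fact slightly more explicit than the paper's in two places --- the derivation of $L_\e$ from $K^\perp = \spanned\{(1,\dots,1)W\}$ and the positivity argument via $\norm{u_{i,\e}-U}_X < 1/2$ --- while the paper additionally notes the trivial-branch condition $T(\a,U,\dots,U)=0$ for all $\a$, which you leave implicit.
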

\begin{proof}
  We apply Theorem 1.7 in \cite{CRJFA} at the operator $T$ defined in
  \eqref{T}. It is easy to see that $T(\a,U,\dots,U)=0$ for any $\a$.
  By Lemma~\ref{lemma1} the operators $\partial_{\a}T$,
  $\partial_uT$ and $\partial_{\a, u}T$
  are well defined and continuous from
  $\R \times (K\cap B^{k})$ to $K$.
  Lemma~\ref{ker-dzT} says that the kernel of
  $\partial_u T(\bar\a,U,\dots,U)$ is one-dimensional while
  Lemma~\ref{ran-dzT} implies that its range has
  codimension
  one.
  Finally, Lemma~\ref{transversality-dT} guarantees that
  the transversality condition holds.
  Therefore all assumptions of Theorem~1.7 in \cite{CRJFA} are
  satisfied.  As a consequence, there exists a neighborhood $V$ of
  $(\bar \a,U,\dots,U)$ in
  $\R \times (K\cap B^{k})$, an interval
  $(-\e_0,\e_0)$, and continuous functions
  $(-\e_0,\e_0)\to \R : \e \mapsto \a_{\e}$ and
  $(-\e_0,\e_0) \to B : \e \mapsto \phi_{i,\e}$ for
  $i=1,\dots,k$ such that $\a_{0} = \bar\a$, $\phi_{i,0}=0$ for
  $i=1,\dots,k$ and
  \begin{equation*}
    T^{-1}(\{0\})\cap V
    = \bigl\{ (\a,U,\dots,U) \bigm| (\a,U,\dots,U)\in V \bigr\}
    \cup
    \bigl\{ (\a_{\e}, u_{1,\e}, \dots, u_{k,\e})
    \bigm| |\e| < \e_0 \bigr\}
  \end{equation*}
  where $(u_{1,\e}, \dots, u_{k,\e})$ is defined by~\eqref{fin}.  In
  particular $T(\a_\e, u_{1,\e}, \dots, u_{k,\e}) = 0$ which means
  that $(u_{1,\e}, \dots, u_{k,\e})$ solves~\eqref{c1}. This concludes
  the proof.
\end{proof}

\begin{lemma}\label{c2}
 Let $L_\e$ be the Lagrange multiplier of Proposition \ref{t3.2}. Then
  \begin{equation}\label{d2}
    |L_\e|\le C.
  \end{equation}
\end{lemma}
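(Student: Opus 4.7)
The plan is to test each equation in~\eqref{c1} against the function $W$ defined in~\eqref{eq:def-W}, sum over $i = 1, \dots, k$, and then exploit the orthogonality constraint $u_\e \in K$ together with the normalization~\eqref{y2} (and the symmetry of $A$) to solve explicitly for $L_\e$.

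Concretely, multiplying the $i$-th equation in~\eqref{c1} by $W$ and integrating by parts gives
\begin{equation*}
  \int_{\R^N} \nabla u_{i,\e} \cdot \nabla W \intd x
  - \sum_{j=1}^k a_{ij}(\a_\e) \int_{\R^N} u_{j,\e}^{2^*-1} W \intd x
  = L_\e \int_{\R^N} \frac{N(N+2)}{(1+|x|^2)^2} W^2 \intd x .
\end{equation*}
Summing over $i$, the first term on the left vanishes because $(u_{1,\e},\dots,u_{k,\e}) \in K$ (cf.~\eqref{K}). Since $A(\a_\e)$ is symmetric with row sums equal to $1$ by~\eqref{y2}, the column sums $\sum_{i=1}^k a_{ij}(\a_\e)$ also equal $1$, so the second term collapses to $\sum_{j=1}^k \int_{\R^N} u_{j,\e}^{2^*-1} W \intd x$. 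Using the eigenfunction identity $-\Delta W = \frac{N(N+2)}{(1+|x|^2)^2} W$ (see Corollary~\ref{corb}), one can also write the right-hand side as $k L_\e \int_{\R^N} \abs{\nabla W}^2 \intd x$, and hence
\begin{equation*}
  L_\e = -\frac{1}{k \int_{\R^N} \abs{\nabla W}^2 \intd x}
    \sum_{j=1}^k \int_{\R^N} u_{j,\e}^{2^*-1} W \intd x .
\end{equation*}

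The denominator is a fixed, strictly positive constant independent of $\e$, so it remains only to bound the numerator uniformly in $\e$. Since $(u_{1,\e},\dots,u_{k,\e}) \in B^k$, one has $\norm{u_{j,\e} - U}_D < \tfrac12$, so $\abs{u_{j,\e}(x)} \le \tfrac32 U(x)$ for every $x \in \R^N$ and every small $\e$. Therefore $u_{j,\e}^{2^*-1} \le C\, U^{2^*-1}$, and since both $W(x)$ and $U(x)$ behave like $\abs{x}^{-(N-2)}$ at infinity, the integrand $U^{2^*-1} \abs{W}$ decays like $\abs{x}^{-2N}$ at infinity and is bounded near the origin; in particular it is integrable on $\R^N$. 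This yields $\abs{L_\e} \le C$.

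I do not expect a serious obstacle here: the only substantial observation is that~\eqref{y2} combined with the symmetry of $A$ gives equal column sums, which is precisely what allows the double sum to simplify cleanly after using $u_\e \in K$. Once this is noted, the bound $\abs{L_\e} \le C$ follows from elementary decay estimates combined with the $D$-norm control provided by $B$.
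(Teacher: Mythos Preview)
Your proof is correct. Both you and the paper test~\eqref{c1} against $W$, but the paper does so only for the single equation $i=1$ and then bounds the two resulting terms on the right-hand side directly using the uniform $D^{1,2}$-boundedness of $u_{s,\e}$ (via Cauchy--Schwarz for the gradient term and H\"older for the nonlinear term). You instead sum over all $i$, which lets you kill the gradient term exactly using $u_\e \in K$ and collapse the double sum via the column-sum identity $\sum_i a_{ij}=1$; this yields an explicit formula for $L_\e$ rather than merely an inequality. Your final bound on $\int u_{j,\e}^{2^*-1} W$ uses the $D$-norm control from $B$ rather than the $D^{1,2}$-bound the paper invokes; either works. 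The paper's argument is shorter, while yours extracts more information (the closed-form expression for $L_\e$) at the cost of one extra observation about the symmetry of~$A$.
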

\begin{proof}
  Let us use the function
  $W$ as test function
  in the first equation to \eqref{c1}. We get
  \begin{multline*}
    N(N+2) L_\e\int_{\R^N}
    \frac{W^2}{(1+|x|^2)^{2}} \intd x
    = \int_{\R^N}\nabla u_{1,\e}\cdot \nabla W  \intd x
    - \sum_{s=1}^ka_{1s}(\a) \int_{\R^N}u_{s,\e}^{2^*-1}
    W(x) \intd x.
  \end{multline*}
  The fact that $u_{s,\e}$ are uniformly bounded in
  $D^{1,2}(\R^N)$ then implies the claim.
\end{proof}

\subsection{The Pohozaev identity}
\label{Pohozaev}

\begin{proposition}\label{poho}
  Suppose that $(u_i)_{i = 1,\dotsc, k}$,
  are positive solutions in $D^{1,2}(\R^N)$ to
  \begin{equation}\label{P1}
    -\Delta u_i=\sum_{j=1}^k a_{ij} u_j^{2^*-1} + H_i(x)
    \quad\text{in } \R^N
  \end{equation}
  where $H_i$ are smooth functions satisfying
  \begin{equation}\label{P1-bis}
    H_i\in L^{2^*}(\R^N),\quad
    |x|H_i \in L^2(\R^N)
  \end{equation}
  and $A$ is an invertible symmetric matrix.  Let us write
  $A^{-1}=(a_{ij}^{-1})_{i,j=1,\dots,k}$.
  % Assume that there exists a symmetrix matrix
  % $\Gamma=(\gamma_{ij})_{i,j=1,\dots,k}$ such that $\Gamma A$ is
  % diagonal.
  % Moreover assume that for $j,h=1,\dots,k$, $j\ne h$, the following
  % linear system,
  % \begin{equation}
  %   \sum\limits_{i=1}^k\gamma_{i,h}\mu_{ij}=0
  % \end{equation}
  % admits $\frac{k(k-1)}2$ nontrivial solutions $\gamma_{i,h}$
  % satisfying $\gamma_{i,h}=\gamma_{h,i}$.
  Then, the following Pohozaev identity holds
  \begin{equation}
    0=\sum_{i,j=1}^k a_{ij}^{-1} \int_{\R^N} H_i(x)
    \biggl(x\cdot \nabla u_j + \frac{N-2}{2} u_j \biggr)
    \intd x
  \end{equation}
  % \begin{equation}\label{B4}
  %   \left(1-\frac N2\right)\int_{R^N}H(x)\sum\limits_{j=1}^ku_j\intd x
  %   - \int_{R^N}H(x)x\cdot\nabla\left( \sum\limits_{j=1}^ku_j\right)\intd x =0.
  % \end{equation}
\end{proposition}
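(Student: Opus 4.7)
The plan is the classical Pohozaev scheme, extended to the coupled system by exploiting the invertibility and symmetry of $A$. Set $P_\ell := x\cdot\nabla u_\ell + \tfrac{N-2}{2}u_\ell$. Two ingredients are needed. First, the polarized Rellich--Ne\v{c}as identity
\begin{equation*}
  \int_{\R^N}\bigl[(-\Delta u_i)P_\ell + (-\Delta u_\ell)P_i\bigr]\intd x = 0
  \quad\text{for every } i,\ell \in\{1,\dotsc,k\},
\end{equation*}
which follows by integrating on $B_R$ the pointwise divergence identity
\begin{equation*}
  \operatorname{div}\bigl((\nabla u_i\cdot\nabla u_\ell)\,x
  - (x\cdot\nabla u_\ell)\nabla u_i - (x\cdot\nabla u_i)\nabla u_\ell\bigr)
  = (N-2)\nabla u_i\cdot\nabla u_\ell
  + (x\cdot\nabla u_\ell)(-\Delta u_i)
  + (x\cdot\nabla u_i)(-\Delta u_\ell),
\end{equation*}
letting $R\to\infty$, and combining with $\int_{\R^N}(-\Delta u_i)u_\ell\intd x = \int_{\R^N}\nabla u_i\cdot\nabla u_\ell\intd x$. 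Second, the scalar Pohozaev identity $\int_{\R^N}u_j^{2^*-1}P_j\intd x = 0$, which is immediate from $\int u_j^{2^*-1}(x\cdot\nabla u_j)\intd x = -\tfrac{N-2}{2}\int u_j^{2^*}\intd x$.

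With these two ingredients, I would multiply the $i$-th equation of~\eqref{P1} by $a_{ji}^{-1}P_j$ and sum over $i$ and $j$. The left-hand side $\sum_{i,j}a_{ji}^{-1}\int(-\Delta u_i)P_j\intd x$ equals, using the symmetry of $A^{-1}$ and relabeling the dummy indices $i\leftrightarrow j$,
\begin{equation*}
  \tfrac12 \sum_{i,j} a_{ji}^{-1}\int_{\R^N}\bigl[(-\Delta u_i)P_j + (-\Delta u_j)P_i\bigr]\intd x,
\end{equation*}
which vanishes by the first ingredient. The nonlinear contribution on the right-hand side is $\sum_{i,j,\mu}a_{ji}^{-1}a_{i\mu}\int u_\mu^{2^*-1}P_j\intd x$; summing over $i$ uses the invertibility of $A$ to produce $\delta_{j\mu}$, collapsing the expression to $\sum_j\int u_j^{2^*-1}P_j\intd x = 0$ by the second ingredient. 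Hence $\sum_{i,j}a_{ji}^{-1}\int H_iP_j\intd x = 0$ and, using $a_{ji}^{-1}=a_{ij}^{-1}$, this is the stated identity.

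The main difficulty is not the algebra but the justification of the integrations by parts: as $u_i$ is a priori only in $D^{1,2}(\R^N)$, each divergence identity must first be integrated on $B_R$ and the surface integrals on $\partial B_R$ shown to vanish along a suitable subsequence $R_n\to\infty$. This requires pointwise decay of $u_i$ and $\nabla u_i$, which one obtains by bootstrapping the Newtonian potential representation
\begin{equation*}
  u_i(x) = \frac{1}{\omega_N(N-2)}\int_{\R^N}\frac{1}{|x-y|^{N-2}}
  \biggl(\sum_j a_{ij}u_j(y)^{2^*-1} + H_i(y)\biggr)\intd y
\end{equation*}
in the spirit of the proof of Lemma~\ref{lemma-comp}, together with Lemma~\ref{lemma-ST}, to reach $|u_i(x)| = O(|x|^{2-N})$ and $|\nabla u_i(x)| = O(|x|^{1-N})$, which kill all the boundary contributions. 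The complementary assumption $|x|H_i\in L^2$ is precisely what ensures, via Cauchy--Schwarz with $\nabla u_j\in L^2$, that each $\int H_i(x\cdot\nabla u_j)\intd x$ is absolutely convergent, so that the limiting integrals appearing in the final identity are well defined.
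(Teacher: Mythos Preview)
Your algebraic argument is correct and considerably more streamlined than the paper's. Both proofs rest on the same two pillars --- the relation $\sum_i a^{-1}_{ji}a_{i\mu}=\delta_{j\mu}$ to diagonalize the nonlinear term, and the criticality identity $\int u_j^{2^*-1}P_j=0$ --- but the paper reaches them by a long hands-on computation: it derives one identity from testing the $i$-th equation against $x\cdot\nabla u_i$ (its boxed relation $\sum_{j\ne i}a_{ij}A_{ij}=B_{ii}$), a second from the symmetrized test against $x\cdot\nabla u_h$ with $h\ne i$ (its boxed $\sum_{j\ne h}a_{ij}A_{hj}+\sum_{j\ne i}a_{hj}A_{ij}=B_{ih}+B_{hi}$), then weights the first by $a_{ii}^{-1}$, the second by $a_{ih}^{-1}$ over $i<h$, and reassembles. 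Your polarized Rellich--Ne\v{c}as identity $\int[(-\Delta u_i)P_\ell+(-\Delta u_\ell)P_i]=0$ packages the diagonal and off-diagonal cases at once, and the symmetrization trick on $\sum a^{-1}_{ji}\int(-\Delta u_i)P_j$ replaces the paper's index gymnastics in a single line. This is a genuine simplification.

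Where your proposal is weaker is the treatment of the boundary terms. You claim pointwise decay $u_i=O(|x|^{2-N})$, $\nabla u_i=O(|x|^{1-N})$ by bootstrapping the Newtonian representation via Lemma~\ref{lemma-ST}. But that lemma needs the source in $L^1$, and under the stated hypotheses one only has $u_j^{2^*-1}\in L^{2N/(N+2)}$ and $H_i\in L^{2^*}$; getting to $L^1$ decay of the source requires an initial decay on $u_j$ that you have not yet established (in the application the $u_{i,\e}$ lie in $X$, so $|u_i|\le CU$ and the bootstrap is immediate, but the proposition is stated for general $u_i\in D^{1,2}$). The paper instead uses the Berestycki--Lions device: the relevant boundary integrands, such as $u_iu_h^{2^*-1}$ and $\nabla u_i\cdot\nabla u_h$, are in $L^1(\R^N)$, so one can choose a sequence $R_n\to\infty$ along which $R_n\int_{\partial B_{R_n}}(\cdot)\to 0$. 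That device fits your divergence identity equally well and would close your argument without any pointwise decay.
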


\begin{proof}
  We will denote by $I_{i,R}$ various boundary terms on $\partial B_R$
  such that, for any integer~$i$,
  \begin{equation}\label{CR}
    |I_{i,R}|\le  C(N) R\int_{\partial B_R}
    \Biggl( \sum_{i,h=1}^k u_i u_h^{2^*-1}
    + \abs{\nabla u_i\cdot \nabla u_h} \Biggr).
  \end{equation}
  Set
  $\sum_{i,h=1}^ku_iu_h^{2^*-1} + \abs{\nabla u_i\cdot \nabla
  u_h} =: G(u_1,\dots,u_k)$
  and, as in \cite{BL}, let us show that there exists a sequence
  $R_n\to +\infty$ such that $I_{i,R_n}\rightarrow0$. Indeed since
  $u_i\in D^{1,2}(\R^N)$ we know that
  $G(u_1,\dots,u_k)\in L^{1}(\R^N)$ so that
  \begin{equation*}
    \int_0^ {+\infty}  \int_{\partial B_R} G(u_1,\dots,u_k )
    \intd\sigma \intd R
    < +\infty.
  \end{equation*}
  Hence, there exists a sequence $R_n\to +\infty$ such that
  \begin{equation}\label{asimp-CR}
    R_n\int_{\partial B_{R_n}} G(u_1,\dots,u_k)\intd\sigma\to 0
    \quad \text{as }n\to +\infty,
  \end{equation}
  and this shows that $I_{i,R_n}\to0$. From now, to simplify the
  notations, we agree that $R=R_n$ and we denote by $C(R)$ a linear
  combination of $I_{i,R}$.

  Let us now start our main argument with the identity:
  \begin{equation*}
    -\int_{B_R}\Delta u_i(x\cdot \nabla u_i)
    = \Bigl(1-\frac{N}{2}\Bigr) \int_{B_R}|\nabla u_i|^2\intd x
    - \underbrace{\int_{\partial B_R}  (x\cdot \nabla u_i)
      \frac{\partial u_i}{\partial \nu} }_{=I_{1,R}}
    + \frac 12 \underbrace{\int_{\partial B_R} |\nabla u_i|^2
      (x\cdot \nu)}_{=I_{2,R}}
  \end{equation*}
  Using the $i$-th equation in \eqref{P1}, we get
  \begin{equation*}\label{g1}
    \Bigl(1-\frac N2\Bigr) \int_{B_R}|\nabla u_i|^2\intd x+C(R)
    = \sum_{j=1}^ka_{ij} \int_{B_R} u_j^{\frac{N+2}{N-2}}
    (x\cdot \nabla u_i)\intd x
    + \int_{B_R}H_i(x) (x\cdot \nabla u_i)\intd x.
  \end{equation*}
  Next we estimate
  \begin{equation}\label{g3}
    \int_{B_R}u_j^\frac{N+2}{N-2}(x\cdot \nabla u_i)
    = -N\int_{B_R}u_iu_j^\frac{N+2}{N-2}-\frac{N+2}{N-2}
    \int_{B_R} u_iu_j^\frac{4}{N-2} (x\cdot \nabla u_j)
    + \underbrace{\int_{\partial B_R}
      u_iu_j^\frac{N+2}{N-2} (x\cdot \nu)}_{=I_{3,R}}
  \end{equation}
  which, when $i = j$, simplifies to
  \begin{equation}\label{g2}
    \int_{B_R}u_i^\frac{N+2}{N-2}(x\cdot \nabla u_i)=
    -\frac{N-2}2\int_{B_R}u_i^\frac{2N}{N-2}
    + \underbrace{\frac{N-2}{2N}R
      \int_{\partial B_R}u_i^\frac{2N}{N-2}}_{=I_{4,R}}
  \end{equation}
  Using~\eqref{g3} and \eqref{g2} we get
  \begin{multline}\label{cl0}
    \left(1-\frac N2\right)\int_{B_R}|\nabla u_i|^2\intd x+C(R)
    = -a_{ii} \frac{N-2}2\int_{B_R}u_i^\frac{2N}{N-2}
    -N\sum\limits_{j\ne i}a_{ij}\int_{B_R}u_iu_j^\frac{N+2}{N-2} \\
    -\frac{N+2}{N-2}\sum\limits_{j\ne i}a_{ij}
    \int_{B_R}(x\cdot \nabla u_j) u_iu_j^\frac{4}{N-2}
    + \int_{B_R}H_i(x) (x\cdot \nabla u_i) \intd x.
  \end{multline}
  On the other hand, multiplying equation \eqref{P1} by $u_i$ and
  integrating yields
  \begin{equation}\label{eq:mul-ui}
    \int_{B_R}|\nabla u_i|^2+C(R)
    = a_{ii}\int_{B_R}u_i^\frac{2N}{N-2}
    + \sum\limits_{j\ne i}a_{ij}\int_{B_R}u_iu_j^\frac{N+2}{N-2}
    + \int_{B_R}H_i(x)u_i \intd x.
  \end{equation}
  Summing \eqref{cl0} and \eqref{eq:mul-ui} multiplied by
  $\frac{N-2}{2}$ gives
  \begin{multline}
    \frac{N+2}2\sum\limits_{j\ne i}a_{ij}\int_{B_R}
    u_iu_j^{\frac{N+2}{N-2}} + \frac{N+2}{N-2}\sum_{j\ne i}
    a_{ij}\int_{B_R}(x\cdot \nabla u_j)
    u_iu_j^{\frac{4}{N-2}} + C(R)\\
    = \int_{B_R}H_i(x) \Bigl(x\cdot \nabla u_i + \frac{N-2}2u_i\Bigr)
  \end{multline}
  Setting
  \begin{equation}\nonumber
    A_{ij} := \frac{N+2}2\int_{B_R}u_iu_j^{\frac{N+2}{N-2}}
    + \frac{N+2}{N-2}\int_{B_R} (x\cdot \nabla u_j) u_iu_j^{\frac{4}{N-2}}
  \end{equation}
  and
  \begin{equation}\label{Bi}
    B_{ij} := \int_{B_R} H_i(x)
    \Bigl(x\cdot \nabla u_j+\frac{N-2}2u_j\Bigr),
  \end{equation}
  the previous identity becomes
  \begin{equation}\label{cl6}
    \boxed{
      \sum\limits_{j\ne i}a_{ij}A_{ij} = B_{ii} +C(R) .
    }
  \end{equation}
  Now let us use the factor $x\cdot\nabla u_h$ against $u_i$.
  Let us start with the identity:
  \begin{equation*}
    -\int_{B_R}\Delta u_i(x\cdot \nabla u_h)
    = \int_{B_R}\nabla u_i\cdot \nabla u_h
    + \sum_{\ell,m=1}^N \int_{B_R} x_\ell
    \frac{\partial u_i}{\partial x_m}
    \frac{\partial^2 u_h}{\partial x_\ell\partial x_m}
    - \underbrace{\int _{\partial B_R}
      \frac{\partial u_i}{\partial \nu}(x\cdot \nabla u_h)}_{=I_{5,R}}
  \end{equation*}
  Using \eqref{P1}, one gets
  \begin{align}
    & \int_{B_R}\nabla u_i\cdot \nabla u_h
      + \sum_{\ell,m=1}^N \, \int_{B_R} x_\ell \frac{\partial u_i}{\partial x_m}
      \frac{\partial^2 u_h}{\partial x_\ell \partial x_m} + C(R)
      \nonumber\\
    &\quad
      = -N\sum\limits_{j\ne h}a_{ij}\int_{B_R}u_h u_j^{\frac{N+2}{N-2}}
      -\frac{N+2}{N-2}\sum_{j\ne h}a_{ij}
      \int_{B_R}(x\cdot \nabla u_j)u_hu_j^{\frac 4{N-2}}
      - a_{ih}\frac{N-2}2\int_{B_R}u_h^{\frac{2N}{N-2}}
      \nonumber\\
    &\label{eq:ui-dh}\quad\hphantom{{}={}}
      + \int_{B_R} H_i(x)(x\cdot \nabla u_h) \intd x.
  \end{align}
  Our intention is to sum \eqref{eq:ui-dh} and the same expression
  with the indices $i$ and $h$ swapped.  Let us start by remarking
  that
  \begin{equation}\label{g10}
    \sum_{\ell,m=1}^N \int_{B_R} x_\ell
    \left(\frac{\partial u_i}{\partial x_m}
      \frac{\partial^2 u_h}{\partial x_\ell \partial x_m}
      + \frac{\partial u_h}{\partial x_m}
      \frac{\partial^2 u_i}{\partial x_\ell \partial x_m}\right)
    = -N\int_{B_R}\nabla u_i \cdot \nabla u_h
    + \underbrace{R\int_{\partial B_R}\nabla u_i\cdot\nabla u_h}_{=I_{6,R}}
    .
  \end{equation}
  Using \eqref{g10}, the sum of \eqref{eq:ui-dh} and its symmetric
  expression reads
  \begin{align}
    &(2-N)\int_{B_R}\nabla u_i\cdot\nabla u_h + C(R)  \nonumber\\
    &= -N \sum_{j\ne h}a_{ij}\int_{B_R} u_h u_j^{\frac{N+2}{N-2}}
      - \frac{N+2}{N-2}\sum_{j\ne h}a_{ij}
      \int_{B_R}(x\cdot \nabla u_j)u_hu_j^{\frac 4{N-2}}
      -a_{ih}\frac{N-2}2\int_{B_R}u_h^{\frac{2N}{N-2}}  \nonumber\\
    & \quad
      - N\sum_{j\ne i}a_{hj}\int_{B_R}u_i u_j^{\frac{N+2}{N-2}}
      - \frac{N+2}{N-2}\sum\limits_{j\ne i}a_{hj}
      \int_{B_R}(x\cdot \nabla u_j)u_iu_j^{\frac 4{N-2}}
      - a_{hi} \frac{N-2}2\int_{B_R}u_i^{\frac{2N}{N-2}}  \nonumber\\
    & \quad\label{cl4}
      + \int_{B_R} H_i(x)(x\cdot \nabla u_h)
      + \int_{B_R} H_h(x)(x\cdot \nabla u_i).
  \end{align}
  Using again the $i$-th equation of \eqref{P1} but this time
  multiplying by $u_h$ yields
  \begin{equation}\label{eq:di-dh}
    \int_{B_R}\nabla u_i\cdot \nabla u_h+C(R)
    = \sum_{j\ne h}a_{ij}\int_{B_R}u_hu_j^\frac{N+2}{N-2}
    + a_{ih}\int_{B_R}u_h^\frac{2N}{N-2}
    + \int_{B_R}H_i(x)u_h \intd x.
  \end{equation}
  Now, let us write
  $2\int_{B_R}\nabla u_i\cdot \nabla u_h = \int_{B_R}\nabla
  u_i\cdot\nabla u_h + \int_{B_R}\nabla u_h\cdot \nabla u_i$
  and substitute the first term using \eqref{eq:di-dh} and
  the second term using \eqref{eq:di-dh} with $i$ and $h$ swapped.
  Let us then multiply the resulting expression by $\frac{N-2}{2}$
  and add it to~\eqref{cl4}.  This gives the following equality:
  \begin{equation}\label{cl5}
    \begin{split}
      &\frac{N+2}{2} \sum_{j\ne h} a_{ij} \int_{B_R} u_h u_j^{\frac{N+2}{N-2}}
      + \frac{N+2}{N-2} \sum_{j\ne h}a_{ij}
      \int_{B_R} \bigl(x\cdot \nabla u_j\bigr) u_h u_j^{\frac 4{N-2}}\\
      &\qquad
      + \frac{N+2}{2} \sum_{j\ne i} a_{hj}\int_{B_R} u_i u_j^{\frac{N+2}{N-2}}
      + \frac{N+2}{N-2} \sum_{j\ne i}a_{hj}
      \int_{B_R} \bigl(x\cdot \nabla u_j\bigr) u_i u_j^{\frac 4{N-2}}\\
      &= \int_{B_R}H_i(x) \Bigl(x\cdot \nabla u_h + \frac{N-2}{2} u_h \Bigr)
      + \int_{B_R} H_h(x) \Bigl(x\cdot \nabla u_i + \frac{N-2}{2} u_i \Bigr)
      + C(R)
    \end{split}
  \end{equation}
  Recalling the definition of $A_{ij}$ and
  $B_{ij}$, one can write~\eqref{cl5} as
  \begin{equation}\label{cl7}
    \boxed{
      \sum_{j\ne h}a_{ij}A_{hj}+\sum_{j\ne i}a_{hj}A_{ij}
      = B_{ih} + B_{hi} + C(R) .}
  \end{equation}
  Now let us multiply \eqref{cl6} by $a_{ii}^{-1}$ for $i=1,\dots,k$
  and sum on $i$. We get that \eqref{cl6} becomes
  \begin{equation*}
    \sum_{i=1}^k\sum_{j\ne i}a_{ii}^{-1}a_{ij}A_{ij}
    = \sum_{i=1}^ka_{ii}^{-1}B_{ii} + C(R).
  \end{equation*}
  Multiplying \eqref{cl7} by
  $a_{ih}^{-1}$ and summing on the triangular bloc of the indices
  $(i,h)$ sa\-tis\-fy\-ing $1 \le i < h \le k$, we get
  \begin{equation*}
    \sum_{i=1}^{k-1}\sum_{h=i+1}^k
    a_{ih}^{-1} \biggl(\sum_{j\ne h}a_{ij}A_{hj}
    + \sum_{j\ne i}a_{hj}A_{ij} \biggr)
    = \sum_{i=1}^{k-1}\sum_{h=i+1}^k a_{ih}^{-1}(B_{ih}+B_{hi})
    + C(R).
  \end{equation*}
  Finally summing up the previous two relations, we get
  \begin{multline}\label{cl8}
    \sum_{i=1}^{k-1}\sum_{h=i+1}^ka_{ih}^{-1}
    \biggl(\sum_{j\ne h}a_{ij}A_{hj}
    + \sum_{j\ne i}a_{hj}A_{ij} \biggr)
    + \sum_{i=1}^k\sum_{j\ne i}a_{ii}^{-1}a_{ij}A_{ij}\\
    = \sum_{i=1}^{k-1}\sum_{h=i+1}^ka_{ih}^{-1}\left(B_{ih}+B_{hi}\right)
    + \sum_{i=1}^k a_{ii}^{-1} B_{ii} + C(R).
  \end{multline}
  Let us consider the RHS of \eqref{cl8} and observe that, using the
  symmetry of the matrix $A^{-1}$, we have
  \begin{align*}
    &\hspace{-3em}
      \sum_{i=1}^{k-1}\sum_{h=i+1}^ka_{ih}^{-1} (B_{ih}+B_{hi})
      + \sum_{i=1}^k a_{ii}^{-1} B_{ii}\\
    & = \sum_{i=1}^{k-1}\sum_{h=i+1}^ka_{ih}^{-1} B_{ih}
      + \sum_{h=1}^{k-1} \sum_{i=h+1}^{k} a_{ih}^{-1}B_{ih}
      + \sum_{i=1}^ka_{ii}^{-1}B_{ii}
      = \sum_{i=1}^{k}\sum_{h=1}^{k} a_{ih}^{-1} B_{ih}
  \end{align*}
  where the last equality results from the fact that all $(i,h) \in
  \{1,\dotsc, k\}^2$ are present in the previous terms: all $i < h$ in
  the first double sum, $i > h$ in the second one, and $i = h$ in the
  third sum.

  Doing the same kind of computation for the LHS, we have
  \begin{multline*}
    \sum_{i=1}^{k-1}\sum_{h=i+1}^ka_{ih}^{-1}
    \biggl(\sum_{j\ne h}a_{ij}A_{hj}
    + \sum_{j\ne i}a_{hj}^{-1}A_{ij} \biggr)
    + \sum_{i=1}^k\sum_{j\ne i} a_{ii}^{-1} a_{ij} A_{ij}\\
    = \sum_{i=1}^{k}\sum_{j\ne i} A_{ij}
    \biggl(\sum_{h=1}^{k}a_{ih}^{-1}a_{hj} \biggr)
    = \sum_{i=1}^{k}\sum_{j\ne i}A_{ij}\delta^j_i
    = 0
  \end{multline*}
  Therefore, \eqref{cl8} reads
  \begin{equation*}
    \sum_{i=1}^{k}\sum_{h=1}^{k} a_{ih}^{-1} B_{ih} + C(R)
    = 0.
  \end{equation*}
  From the summability assumptions on $u_i$ and $H$, we can pass to the
  limit along the sequence $R_n\to +\infty$ chosen at the beginning of
  this proof and get
  \begin{equation*}
    \sum_{i=1}^{k}\sum_{h=1}^{k} a_{ih}^{-1} \int_{\R^N}
    H_i(x) \Bigl(x\cdot \nabla u_h + \frac{N-2}{2} u_h \Bigr)
    = 0.
    \qedhere
  \end{equation*}
\end{proof}

\begin{lemma}\label{sum-invA}
  Le $A$ be an invertible matrix satisfying~\eqref{y1a}, \eqref{y2}
  and denote $a_{ij}^{-1}$ the entries of $A^{-1}$.  Then
  \begin{equation*}
    \sum_{i,j=1}^k a_{ij}^{-1} = k.
  \end{equation*}
\end{lemma}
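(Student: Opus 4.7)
The plan is extremely short: the hypothesis \eqref{y2} is exactly the statement that the vector $e := (1,\dotsc,1)^T \in \R^k$ is an eigenvector of $A$ with eigenvalue $1$, since the $i$-th component of $Ae$ is precisely $\sum_{j=1}^k a_{ij} = 1$. I would first record this observation.

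Next, since $A$ is assumed invertible, $1$ is a nonzero eigenvalue and $e$ is also an eigenvector of $A^{-1}$ with eigenvalue $1/1 = 1$. Equivalently, applying $A^{-1}$ to both sides of $Ae = e$ yields $A^{-1}e = e$. Writing the $i$-th component of this identity gives
\begin{equation*}
  \sum_{j=1}^k a_{ij}^{-1} = 1
  \quad\text{for every } i = 1,\dotsc,k.
\end{equation*}
Summing over $i$ from $1$ to $k$ yields $\sum_{i,j=1}^k a_{ij}^{-1} = k$, which is the claim. Note that the symmetry hypothesis \eqref{y1a} is not actually needed here; only \eqref{y2} and invertibility are used. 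There is no real obstacle: the whole argument is a one-line consequence of the fact that the all-ones vector is a fixed point of $A$ and hence of $A^{-1}$.
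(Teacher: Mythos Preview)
Your proof is correct and follows essentially the same approach as the paper: both recognize that \eqref{y2} says $A(1,\dots,1)^\top = (1,\dots,1)^\top$, hence $A^{-1}(1,\dots,1)^\top = (1,\dots,1)^\top$, and then sum the entries. Your observation that the symmetry hypothesis \eqref{y1a} is not needed is also correct.
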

\begin{proof}
  Assumption~\eqref{y2} can be written
  \begin{math}
    A (1,\dots,1)
    = (1,\dots,1)
  \end{math}.
  Multiplying both sides by $(1,\dots,1)^\top A^{-1}$, one gets
  \begin{equation*}
    \sum_{i,j=1}^k a_{ij}^{-1}
    = (1,\dots,1)^\top A^{-1} (1,\dots,1)
    = (1,\dots,1)^\top (1,\dots,1)
    = k.
    \qedhere
  \end{equation*}
\end{proof}

We are in position to prove our main result:
\begin{proof}[Proof of Theorem \ref{Th1}]
  Proposition~\ref{t3.2} says that there exist $u_{j,\e}$
  satisfying~\eqref{c1} for $\e$ small enough.
  Assumption~\eqref{y10*} says that the
  matrix $A$ is invertible at $\bar \a$ and then from
  Proposition~\ref{poho}, we get
  \begin{equation}\label{lll}
    L_{\e} \sum_{i,j=1}^k a_{ij}^{-1} \int_{\R^N}
    \frac{N(N+2)}{(1+|x|^2)^{2}} W(x)
    \Bigl(x\cdot \nabla u_{i,\e} + \frac{N-2}{2} u_{i,\e} \Bigr)
    \intd x = 0.
  \end{equation}
  Recalling that $u_{i,\e}\to U$ in $D^{1,2}(\R^N)$ when $\e \to 0$,
  one can pass to the limit and get
  \begin{equation*}
    \int_{\R^N}\frac {N(N+2)}{(1+|x|^2)^{2}} W(x)
    \Bigl(x\cdot \nabla u_{i,\e}+\frac{N-2}2u_{i,\e}\Bigr) \intd x
    \xrightarrow[\e \to 0]{}
    \int_{\R^N} \frac{N(N+2)}{(1+|x|^2)^{2}} W^2(x) \intd x
    \ne 0.
  \end{equation*}
  Thanks to Lemma~\ref{sum-invA}, $\sum_{i,j=1}^k a_{ij}^{-1} \ne 0$
  and then \eqref{lll} implies that $L_\e=0$ for
  $\e$ small enough, concluding the proof.
\end{proof}

\appendix

\section{Computation of the first derivative
  of  the parameter}\label{A}
In this appendix we give some information on the behavior of branch of solutions of Theorem \ref{Th1}. Let us recall that the bifurcation is called {\em transcritical} if 
\begin{equation}\label{A1}
  \dd{\a_{\e}}{\e}\Bigr|_{\e = 0} \ne 0.
\end{equation}
Although in the literature are present formulas for the calculation of the derivative of  $\alpha_\e$ (see for example \cite{K}), it seems difficult to provide a complete characterization of the bifurcation diagram. In next proposition we give a sufficient condition to have a transcritical bifurcation.
\begin{proposition}
Let us suppose that
\begin{equation}\label{A2}
 \sum_{j=1}^k e_{n,j}^3 \int_{\R^N} U^{\frac{6-N}{N-2}} \, W_n^3 \intd x\ne0,
\end{equation}
 (see~\eqref{et1} for the definition of $e_n$). Then the bifurcation given in Theorem \ref{Th1} is transcritical.
\end{proposition}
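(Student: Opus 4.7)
The plan is to perform a second-order Lyapunov--Schmidt expansion along the bifurcating branch. By Theorem~\ref{Th1}, the Lagrange multiplier $L_\e$ vanishes for $\e$ near $0$, so $(\a_\e, u_\e)$ solves the genuine system $-\Delta u_{i,\e} = \sum_j a_{ij}(\a_\e) u_{j,\e}^{p}$ with $p := (N+2)/(N-2)$. Since $u_{i,\e}$ stays bounded away from $0$ (it equals $U$ at $\e=0$), the nonlinearity is $C^2$ in a neighbourhood and the branch itself inherits $C^2$ regularity in~$\e$. I would therefore expand
\begin{equation*}
  u_{i,\e} = U + \e\, e_{n,i} W_n + \e^2 \psi_i + o(\e^2),
  \qquad
  \a_\e = \bar\a + \e\,\a'(0) + o(\e)
\end{equation*}
and match powers of~$\e$.

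At order~$\e$ one simply recovers $\mathcal{L}\eta = 0$, where $\eta := e_n W_n$ and $\mathcal{L}\psi_i := -\Delta\psi_i - \frac{N(N+2)}{(1+|x|^2)^2}\sum_j a_{ij}(\bar\a)\psi_j$, as expected. Crucially, differentiating the identity $\sum_j a_{ij}(\a) = 1$ gives $\sum_j a_{ij}'(\bar\a) = 0 = \sum_j a_{ij}''(\bar\a)$, which kills every contribution involving $\a'(0)^2\,U^p$ and $\a''(0)\,U^p$ in the $\e^2$ expansion. The resulting equation for the correction $\psi$ is $\mathcal{L}\psi_i = S_i$, with
\begin{equation*}
  S_i = \frac{p(p-1)}{2}\, U^{\frac{6-N}{N-2}} W_n^2 \sum_{j=1}^k a_{ij}(\bar\a) e_{n,j}^2
  + \a'(0)\, \frac{N(N+2)}{(1+|x|^2)^2}\, W_n \sum_{j=1}^k a_{ij}'(\bar\a) e_{n,j}.
\end{equation*}

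The key step is the Fredholm solvability condition. The operator $\mathcal{L}$ is symmetric on $(D^{1,2}_\rad(\R^N))^k$ (because $A$ is symmetric), and $\eta\in\ker\mathcal{L}$ by Corollary~\ref{corb}. Hence any solution $\psi$ of $\mathcal{L}\psi = S$ must satisfy the compatibility condition $\sum_i\int_{\R^N} S_i\eta_i\intd x = 0$. Using $A(\bar\a) e_n = \l_n e_n$ together with the symmetry of $A$ I would obtain
\begin{equation*}
  \sum_{i,j} e_{n,i}\, a_{ij}(\bar\a)\, e_{n,j}^2 = \l_n \sum_{j=1}^k e_{n,j}^3,
\end{equation*}
while~\eqref{eq:dA} gives $\sum_{i,j} e_{n,i}\, a_{ij}'(\bar\a)\, e_{n,j} = \dd{\L_\ibar}{\a}(\bar\a)$. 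Collecting these, the compatibility condition becomes
\begin{equation*}
  \frac{p(p-1)}{2}\, \l_n \biggl(\sum_{j=1}^k e_{n,j}^3\biggr)\! \int_{\R^N}\! U^{\frac{6-N}{N-2}} W_n^3 \intd x
  + \a'(0)\, \dd{\L_\ibar}{\a}(\bar\a)\! \int_{\R^N}\! \frac{N(N+2)}{(1+|x|^2)^2}\, W_n^2 \intd x = 0.
\end{equation*}
The coefficient of $\a'(0)$ is nonzero by~\eqref{y9} (the integral is strictly positive), and the first term is nonzero by hypothesis~\eqref{A2}. Solving for $\a'(0)$ yields $\a'(0) \ne 0$, which is precisely the transversality condition~\eqref{A1}.

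The main obstacle is to rigorously justify the second-order expansion --- the $C^1$ branch from Crandall--Rabinowitz has to be bootstrapped to $C^2$ using the implicit function theorem at the next order --- and to verify integrability of $U^{(6-N)/(N-2)} W_n^3$ on $\R^N$, which is delicate for $N \ge 6$ but handled by the decay $W_n(x) = O(|x|^{-(N-2)})$ at infinity coming from~\eqref{eq:def-Wn}. Finally, the orthogonality computation is unaffected by the projection $P_K$ appearing in the definition of $T$ because $\eta \in K$ and $P_K$ is self-adjoint, so pairing $\mathcal{L}\psi$ with $\eta$ in $K$ is the same as the pairing above.
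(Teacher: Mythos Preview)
Your approach is correct and essentially the same as the paper's: both compute the Crandall--Rabinowitz derivative $\a'(0)$ by pairing the second variation of the operator against the kernel element $\eta=e_nW_n$, arriving at the identical identity $\a'(0)\,\dd{\L_\ibar}{\a}(\bar\a)\int\frac{N(N+2)}{(1+|x|^2)^2}W_n^2 = -\frac{p(p-1)}{2}\,\l_n\sum_j e_{n,j}^3\int U^{\frac{6-N}{N-2}}W_n^3$. The only difference is packaging: the paper quotes Kielh\"ofer's formula $\a'(0) = -\tfrac12\,(\partial_u^2 T[\eta,\eta]\mid\eta)\big/(\partial_{\a u}T[\eta]\mid\eta)$ directly and plugs in, whereas you rederive this formula by hand via the second-order expansion and the Fredholm solvability condition --- your honest remark about needing to bootstrap the branch to $C^2$ is exactly the regularity hypothesis under which Kielh\"ofer's formula is proved.
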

\begin{remark}
If $k=2$ we have $e_{n}=(1,-1)$ and so \eqref{A2} is never satisfied. In this case we need refined estimates involving higher order derivatives. We do not investigate this situation. On the other hand, if $k\ge3$ it is easy to find matrices $A$ verifying $ \sum_{j=1}^k e_{n,j}^3\ne0$. Finally, in the special case $N=4$ and $n=2$ we get
\begin{equation}
 \int_{\R^N} U^{\frac{6-N}{N-2}} \, W_n^3 \intd x=\int_{\R^4} U \, W_2^3 \intd x=  \int_{-1}^1 (1 - \xi^2)
    \bigl( P_2^{(1, 1)}(\xi) \bigr)^3 \intd \xi= \frac{27}{64} \int_{-1}^1 (1 - \xi^2) (5\xi^2-1)^3\ne0.
\end{equation}
\end{remark}
\begin{proof}

Using the formula (1.6.3) at page 21 in \cite{K} we get,
\begin{equation}\label{x1}
  \dd{\a_{\e}}{\e}\Bigr|_{\e = 0}
  = - \frac12\frac{\bigl(\partial^2_{u} T(\bar\a,U,\dots,U) [\eta,\eta]
    \bigm| \eta \bigr)}
  {\bigl(\partial_{\a u}T(\bar\a,U,\dots,U) [\eta] \bigm| \eta \bigr)}
\end{equation}
where $\eta$ is defined in~\eqref{et1}
and $(\cdot\mathbin|\cdot)$ denotes the inner product
in $\bigl(D^{1,2}(\R^N)\bigr)^k$. Let
us compute the numerator.
Given the definition~\eqref{T} of $T$, one easily gets
\begin{equation*}
  \partial^2_{u} T(\a, U,\dots,U) [w, w'\thinspace ]
  = P_K
  \begin{pmatrix}
    \displaystyle
    - (-\Delta)^{-1} \biggl(
    \sum_{j=1}^k a_{ij}(\a) \, U_2 \, w_j w'_j \biggr)
  \end{pmatrix}_{i=1}^k
\end{equation*}
where $U_2 := \frac{4(N+2)}{(N-2)^2} U^{\frac{6-N}{N-2}}$.
In particular, in view of the definition of $\eta$ (see~\eqref{et1}),
\begin{equation*}
  \partial^2_{u} T (\bar\a, U,\dots,U) [\eta, \eta]
  = P_K
  \begin{pmatrix}
    \displaystyle
    - (-\Delta)^{-1} \biggl(
    \sum_{j=1}^k a_{ij}(\bar\a) \, e_{n,j}^2 \, U_2 \, W_n^2 \biggr)
  \end{pmatrix}_{i=1}^k
\end{equation*}
Since $\eta \in K$, we can drop $P_K$ when performing the inner
product.  Thus the numerator reads:
\begin{multline}\label{eq:DzzT=0}
  \sum_{i=1}^k e_{n,i}
  \int_{\R^N} \nabla\Biggl(
  - (-\Delta)^{-1} \biggl(
  \sum_{j=1}^k a_{ij}(\bar\a) \, e_{n,j}^2 \, U_2 \, W_n^2 \biggr)  
  \Biggr)
  \cdot \nabla W_n \intd x\\
  =-
  e_{n,i} \, a_{ij}(\bar\a) \, e_{n,j}^2
  \int_{\R^N} U_2 \, W_n^3 \intd x
\end{multline}
Recalling that $e_n$ is an eigenvector of $A(\bar\a)$ for the
eigenvalue $\L_\ibar(\bar\a)$, one gets from~\eqref{eq:DzzT=0} 
\begin{equation*}
   \dd{\a_{\e}}{\e}\Bigr|_{\e = 0}=0\Leftrightarrow - \L_\ibar(\bar\a) \sum_{j=1}^k e_{n,j}^3
  \int_{\R^N} U_2 \, W_n^3 \intd x=0
\end{equation*}
which gives the claim.
\end{proof}


\begin{thebibliography}{99}

\bibitem[AGAP]{AGAP} \textsc{A. Ambrosetti, J. Garcia Azorero, I. Peral},
  Perturbation of $\Delta u+u^{\frac{N+2}{N−2}}=0$, the scalar curvature
  problem in $\R^N$, and related topics, J.\ Funct.\ Anal.\ 165 (1999),
  117--149.

\bibitem[BE]{BE} \textsc{G. Bianchi, H. Egnell},
  A note on the Sobolev inequality, J.\ Funct.\ Anal.\ 100 (1991), 18--24.

\bibitem[BL]{BL} \textsc{H. Berestycki, P.L. Lions}, Nonlinear scalar
  field equations. I. Existence of a ground state, Arch. Rational
  Mech. Anal., 82 (1983), 313–345.

\bibitem[CGS]{CGS} \textsc{L.A. Caffarelli, B. Gidas, J. Spruck},
  Asymptotic symmetry and local behavior of semilinear elliptic
  equations with critical Sobolev growth , Comm. on Pure and
  Appl. Mat., Vol 17, 1989, 271-297.

\bibitem[C]{C} \textsc{T.S. Chihara}, An introduction to Orthogonal
  Polynomials, Gordon and Breach, New York, 1977.

\bibitem[CK]{CK} \textsc{S. Chanillo, M.K.H. Kiessling}, Conformally
  Invariant Systems of Nonlinear PDE of Liouville Type,
  Geom. Funct. Anal. 5.6 (1995), 924-947.

\bibitem[CSW]{CSW} \textsc{M. Chipot, I. Shafrir, G. Wolansky}, On the
  solutions of Liouville systems, J. Diff. Eqns 140 (1997), 59-105.

\bibitem[CDM]{CDM}  \textsc{P. Clement, D.G. de Figueiredo, E. Mitidieri}, Positive Solutions of Semilinear Elliptic Systems, Comm. Part. Diff. Eq. (1992), 923-940. 

\bibitem[CR]{CRJFA} \textsc{M. Crandall, P. Rabinowitz}, Bifurcation
  from simple eigenvalues, J.\ Funct.\ Anal.\ 8 (1971), 321-340.


\bibitem[DH]{DH} \textsc{O. Druet, E. Hebey}, Stability for strongly
  coupled critical elliptic systems in a fully inhomogeneous medium,
  Anal. PDE 2 (2009), 305–359.

\bibitem[GG]{GG} \textsc{F. Gladiali, M. Grossi}, On the spectrum of a
  nonlinear planar problem, Ann.\ Inst.\ H.\ Poincar\'e\ Anal.\ Non\
  Lin\'eaire\ 26 (2009), 191-222.

%\bibitem[GG2]{GG-jde}  \textsc{F.~Gladiali, M.~Grossi}

\bibitem[GGN]{GGN} \textsc{F. Gladiali, M. Grossi, S. Neves},
  Symmetry breaking and Morse index of solutions of nonlinear
  elliptic problems in the plane, Communications in Contemporary Mathematics

\bibitem[GGW]{GGW} \textsc{F. Gladiali, M. Grossi, J. Wei}, On a
  general $SU(3)$ Toda system, Calc.\ Var. Partial Differential Equations 54 (2015),  3353-3372.  

\bibitem[JW]{JW} \textsc{J.~Jost, G.~Wang}, Classification of
  solutions of a Toda system in $\R^2$, Int.\ Math.\ Res.\ Not.\ (2002),
  277-290.

\bibitem[K]{K} \textsc{H.~Kielh{\"o}fer}, Bifurcation theory. An
  introduction with applications to partial differential
  equations. Second edition. Applied Mathematical Sciences,
  156. Springer, New York, 2012.

\bibitem[LTWZ]{LTWZ} \textsc{J.S. Li, E.C. Tan, N. Wallach, C.B. Zhu},
  Harmonic analysis, group representations, automorphic forms and
  invariant theory,
  In honor of Roger E.~Howe, Lecture Notes Series,
  Institute for Mathematical Sciences, National University of
  Singapore: Vol. 12, World Scientific, 2007.

\bibitem[LZ1]{LZ1}\textsc{C.S. Lin, L. Zhang}, Profile of bubbling
  solutions to a Liouville system, Ann. Inst. H. Poincar\'e Anal. Non
  Lin\'eaire 27 (2010), 117-143.

\bibitem[LZ2]{LZ2} \textsc{C.S. Lin, L. Zhang}, A topological degree
  counting for some Liouville systems of mean field type, Comm. Pure
  Appl. Math. 64 (2011), 556-590.

\bibitem[M1]{M1} \textsc{E. Mitidieri}, A Rellich type identity and
  applications, Comm. Partial Differential Equations 18 (1993),
  125–151.

\bibitem[M2]{M2} \textsc{E. Mitidieri}, Nonexistence of positive
  solutions of semilinear elliptic systems in $\R^N$, Differential
  Integral Equations 9 (1996), 465–479.
  
\bibitem[PV]{PV} \textsc{L.A. Peletier,  R.C.A.M. Van der Vorst}, Existence and nonexistence of positive solutions of nonlinear elliptic systems and the biharmonic equation, Diff. Int. Eq (1992), 747-767. 

\bibitem[S]{S} \textsc{D. Serre}, Matrices: Theory and Applications,
  Graduate Texts in Mathematics 216, Springer, 2010.

\bibitem[ST]{ST} \textsc{D. Siegel, E. Talvila},
Pointwise growth estimates of the Riesz potential.
Dynam. Contin. Discrete Impuls. Systems 5 (1999), 185-194.

\bibitem[WZZ]{WZZ} \textsc{J. Wei, C.Y. Zhao, F. Zhou}, On
  nondegeneracy of solutions of $SU(3)$-Toda system, CRAS 349(2011),
  185-190.
\end{thebibliography}
\end{document}